\frenchspacing \linespread{1.05}
\newtheorem{Definition}{Definition}
\newtheorem{Lemma}{Lemma}
\newtheorem{Remark}{Remark}
\newtheorem{Proposition}{Proposition}
\newenvironment{proof}[1][Proof]{\textbf{#1:} }{\ \rule{0.5em}{0.5em}}
\date{\empty}
\date{}
\title{Affine and Projective vector fields on five-dimensional nilpotent Lie groups.}
\begin{document}
	
	\maketitle
	\begin{center}
		\author{ \textbf{M. L. Foka}$^{1}$, \quad \textbf{   R.P. Nimpa}$^{2}$,\quad \textbf{M. B. N.  Djiadeu}$^{3}$,\\
			\small{e-mail: $\textbf{1}.$ lanndrymarius@gmail.com, \quad
				$\textbf{2}.$ romain.nimpa@facsciences-uy1.cm, \\
				$\textbf{3}.$  michel.djiadeu@facsciences-uy1.cm,   \\
				University of Yaounde 1, Faculty of Science, Department of
				Mathematics,  P.O. Box 812, Yaounde, Republic of Cameroon.}}
	\end{center}

	\begin{abstract}
This paper presents a complete classification of left-invariant affine and projective vector fields on five-dimensional simply connected nilpotent Lie groups endowed with Riemannian metrics. Building on the classification of left-invariant metrics on five-dimensional nilpotent Lie groups made by FOKa et al., we develop an algebraic characterization of these vector fields on an arbitrary Riemannian Lie group. We then employ this framework to classify left-invariant affine and projective vector fields on simply connected, Riemannian nilpotent Lie groups of dimension five.
 Key results demonstrate that all projective vector fields in this context are necessarily affine, extending classical results by \cite{Kobayashi1963} on homogeneous spaces. We provide explicit matrix representations of the relevant operators and solve the resulting systems of equations case-by-case using algebraic techniques.
\end{abstract}

\textbf{Keywords:} nilpotent Lie groups; affine vector fields; projective vector fields.
\vspace{0.5cm}

\textbf{MSC}:\quad $53C20,\, 53C35,\, 53C30$

\section*{Introduction}

Riemannian geometry has given rise to numerous investigations into geometric vector fields such as Killing fields, harmonic fields, conformal $1$-harmonic fields, and affine or projective fields \cite{cal,Dod,Foka}. These works have explored two complementary directions: on the one hand, the search for the geometric and analytic conditions that a manifold must satisfy in order to admit such fields; on the other, the study of the impact that their existence can have on the global structure and topology of the manifold. As generators of infinitesimal transformations, these fields reveal deep symmetries of the underlying geometric model and can strongly constrain the nature of the curvatures, metrics, and associated topological invariants.

On a compact and orientable Riemannian manifold, every affine vector field is necessarily a Killing field, while every projective vector field on such a manifold with nonpositive Ricci curvature is a parallel field, hence also Killing, as shown in the classical works of Kobayashi and Yano \cite{kobayashi,yano}. These results reflect a strong geometric constraint imposed by compactness and Riemannian structure. A direct proof of this property was recently proposed by Alshehri and Guediri, who established tensorial formulas characterizing affine fields on pseudo-Riemannian manifolds and proved that any affine field on a compact Riemannian manifold is necessarily Killing \cite{alshehri2024}. Their approach also makes it possible to formulate necessary and sufficient conditions for an affine field to be classified as Killing or parallel.

In the broader framework of complete noncompact Riemannian manifolds, Yorozu showed that if an affine vector field has finite global norm, then it is also a Killing field; likewise, if the manifold has nonpositive Ricci curvature, every projective vector field of finite global norm is parallel \cite{yorozu}. These generalizations show that the finiteness of the global norm can play a role analogous to compactness in the characterization of infinitesimal symmetries of Riemannian manifolds.

In the wider context of Finsler geometry, Tian proposed a local characterization of projective vector fields using local coordinates and highlighted remarkable properties on compact Finsler manifolds. He showed in particular that on a compact Finsler manifold with strictly negative flag curvature, no projective vector fields exist, while in the case of nonpositive flag curvature, projective fields exhibit strongly constrained behavior. Tian also established nontrivial relations between the different types of geometric fields---projective, affine, conformal, homothetic, and Killing---thus underscoring the deep interactions among these structures in the Finslerian framework \cite{tian2014}.

Furthermore, Masoumi studied projective vector fields on two special $(\alpha,\beta)$-metrics, namely Kropina and Matsumoto metrics. It was shown that if a Kropina metric $F = \alpha^2/\beta$ or a Matsumoto metric $F = \alpha^2/(\alpha - \beta)$ admits a projective vector field, then this field is necessarily conformal with respect to the Riemannian metric $\alpha$, or else the Finsler metric $F$ has vanishing S-curvature \cite{masoumi2020}. These results illustrate that even within more general geometric frameworks, projective fields remain strongly constrained by the underlying structure.

Finally, in an even more rigid setting, Hiramatu proved that if a Riemannian manifold $M$ is connected, compact, orientable, simply connected, of dimension $n > 1$, and has constant scalar curvature $K$, then the existence of a non-affine projective vector field implies that $M$ is globally isometric to a sphere of radius $\sqrt{n(n-1)/K}$ embedded in the Euclidean space of dimension $n+1$ \cite{hiramatu1980}. This result highlights the power of projective conditions in the global classification of manifolds.

However, despite these advances, little attention has been paid to affine and projective vector fields in the context of Lie groups, particularly nilpotent groups of dimension five. Indeed, the presence of a left-invariant Riemannian metric makes it possible to explicitly express the Levi-Civita connections, Lie derivative operators, and tensorial conditions associated with affine and projective fields.

The purpose of this article is therefore to study left-invariant affine-Killing and projective vector fields on five-dimensional nilpotent Lie groups, using an explicit matrix-based approach. This study is part of the ongoing work on infinitesimal symmetries of homogeneous manifolds and contributes to a better understanding of the interactions between Riemannian geometry and Lie algebras.

The manuscript is organized as follows. Section~1 recalls the fundamental definitions and properties of affine and projective vector fields, incorporating the key results from \cite{w} and \cite{Fok}. Section~2 develops an algebraic characterization of left-invariant affine and projective vector fields on Lie groups. Sections $3$ and $4$ present the complete classification of these vector fields on five-dimensional simply connected nilpotent Lie groups, by solving the structural systems derived from the affine and projective conditions.

\section{Preliminaries}
\subsection{Affine and Projective vector fields on Riemannian manifolds}
Let $(\mathrm{M},\mathrm{g})$ be a Riemannian manifold and $\nabla$ its associated Levi-Civita connexion.
In this section, we introduce affine and projective vector fields on $(\mathrm{M},\mathrm{g})$. These notions serve as essential foundations for the subsequent sections. For detailed discussions of the concepts presented here, we refer the reader to \cite{w}.

\begin{Definition}
A map \(f : (N, \nabla^N) \to (M, \nabla^M)\) between manifolds with linear connections is called \emph{affine} if
\[
f_* (\nabla^N_X Y) = \nabla^M_{f_* X}(f_* Y)
\quad\text{for all }X,Y \in \mathfrak{X}(N).
\]
An \emph{affine transformation} of \((\mathrm{M},\mathrm{g})\) is an affine diffeomorphism of \(M\).
\end{Definition}

\begin{Definition} A vector field \(\xi\) on $\mathrm{M}$ is said to be affine if its local one-parameter group consists of local affine transformations of \((\mathrm{M}, \mathrm{g}, \nabla)\). 

Affine vector fields preserve the geodesic structure of \((\mathrm{M},\mathrm{g})\) and preserve the affine parameter.
There exists another class of smooth vector fields on $(\mathrm{M},\mathrm{g})$ whose flows map geodesics to geodesics without necessarily preserving the affine parameter. Indeed, the flow of a \emph{projective vector field} reparametrizes geodesics in this way.
\end{Definition}

\begin{Definition}
A map \(f : (N, \nabla^N) \to (M, \nabla^M)\) between manifolds with torsion-free connections is called \emph{projective} if for every geodesic \(\gamma\) of \(\nabla^N\), \(f \circ \gamma\) is a reparametrization of a geodesic of \(\nabla^M\). A \emph{projective transformation} of \((M, \nabla)\) is a diffeomorphism \(f : (M, \nabla) \to (M, \nabla)\) which is projective.
\end{Definition}

\begin{Definition}
A vector field \(\xi\) on $\mathrm{M}$ is said to be projective if its local one-parameter group consists of local projective transformations.
\end{Definition}
We shall employ the following proposition in the next section to classify all affine and projective vector fields on simply connected homogeneous nilmanifolds of dimension 5.

\begin{Proposition} \cite{w}
A vector field \(\xi\) on a Riemannian manifold \((M,g)\) is projective if and only if there exists a one-form \(\omega \in \Omega^1(M)\), called the \emph{associated 1-form}, such that
\begin{equation}\label{proj}
\mathscr{L}_\xi \nabla = \omega \otimes \text{id} + \text{id} \otimes \omega
\end{equation}
where \(\nabla\) is the Levi-Civita connection of \(g\), $\mathscr{L}$ the  and $\mathfrak{X}(M)$ The set of all vector fields on $\mathrm{M}$.\\
Moreover, \(\xi\) is affine if and only if the one-form \(\alpha \) vanishes. This means:
\begin{equation}\label{aff}
\mathscr{L}_\xi \nabla
=0.
\end{equation}
\end{Proposition}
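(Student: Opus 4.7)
The plan is to reduce the statement to the classical characterization of projectively equivalent torsion-free connections, combined with the standard infinitesimal reformulation of flow-preserved structures. First I would verify that the Lie derivative $\mathscr{L}_\xi \nabla$ is a well-defined $(1,2)$-tensor field, symmetric in its two lower arguments because $\nabla$ is torsion-free. Explicitly, the formula
\[
(\mathscr{L}_\xi \nabla)(X,Y) = \mathscr{L}_\xi(\nabla_X Y) - \nabla_X(\mathscr{L}_\xi Y) - \nabla_{[\xi,X]} Y
\]
is $C^\infty(M)$-linear in both $X$ and $Y$, which underpins the tensorial nature of both sides of the identity to be proved.

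For the affine part ($\omega = 0$) the argument is direct. Let $\phi_t$ denote the local flow of $\xi$. Then $\xi$ is affine iff each $\phi_t$ is a local affine transformation, i.e.\ $\phi_t^*\nabla = \nabla$ for all sufficiently small $t$. The one-parameter family of connections $t \mapsto \phi_t^*\nabla$ satisfies $\tfrac{d}{dt}(\phi_t^*\nabla) = \phi_t^*(\mathscr{L}_\xi \nabla)$, so differentiating at $t=0$ forces $\mathscr{L}_\xi \nabla = 0$, and conversely this vanishing forces $\phi_t^*\nabla \equiv \nabla$ by integration. This gives equation \eqref{aff}.

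For the projective direction, the crucial input is the classical theorem of projectively equivalent connections: two torsion-free connections $\nabla$ and $\nabla'$ on $M$ share the same unparametrized geodesics iff there exists a 1-form $\omega$ such that $\nabla'_X Y - \nabla_X Y = \omega(X)Y + \omega(Y)X$ for all $X,Y$. Applying this to the pair $(\nabla,\ \phi_t^*\nabla)$, one sees that $\xi$ is projective iff for each small $t$ there is a 1-form $\omega_t$ (with $\omega_0 = 0$) satisfying $(\phi_t^*\nabla)_X Y - \nabla_X Y = \omega_t(X)Y + \omega_t(Y)X$. Differentiating this identity at $t=0$ and setting $\omega := \tfrac{d}{dt}\big|_{t=0}\omega_t$ yields $(\mathscr{L}_\xi\nabla)(X,Y) = \omega(X)Y + \omega(Y)X$, which is exactly \eqref{proj}. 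The converse follows by constructing $\omega_t$ along the flow from the given $\omega$ and integrating the ODE for $\phi_t^*\nabla$.

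The main obstacle I anticipate is the regularity and uniqueness of the family $\{\omega_t\}_t$, so that differentiation at $t=0$ is legitimate. Uniqueness of $\omega$ in the projective-equivalence theorem follows from a trace argument: contracting the upper index of the difference tensor $D(X,Y) = \omega(X)Y + \omega(Y)X$ yields a scalar multiple of $\omega$ depending only on $\dim M$. Applying this same trace to the smooth family $\phi_t^*\nabla - \nabla$ produces $\omega_t$ as a smooth function of $t$, vanishing at $t=0$, and then $\omega$ is its derivative at $0$. With regularity in hand, the passage between infinitesimal and integral versions presents no further difficulty, and the proposition follows.
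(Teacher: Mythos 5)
Your argument is sound, but note that the paper does not actually prove this Proposition: it is imported verbatim from Poor's book \cite{w} and used as a black box, so there is no internal proof to compare against. What you have written is essentially the classical textbook argument: (i) the identity $\tfrac{d}{dt}\,\phi_t^*\nabla=\phi_t^*(\mathscr{L}_\xi\nabla)$ for the flow $\phi_t$, which settles the affine case by differentiation and integration; (ii) the Weyl--Levi-Civita theorem that two torsion-free connections share their unparametrized geodesics iff their difference tensor is $\omega\otimes\mathrm{id}+\mathrm{id}\otimes\omega$, applied to the pair $(\nabla,\phi_t^*\nabla)$ — the hard direction of that theorem for the forward implication, the easy direction for the converse via $\omega_t=\int_0^t\phi_s^*\omega\,ds$; and (iii) the trace identity $c(\omega\otimes\mathrm{id}+\mathrm{id}\otimes\omega)=(n+1)\omega$, which gives uniqueness and smooth $t$-dependence of $\omega_t$ and legitimizes the differentiation at $t=0$. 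You do lean on Weyl's projective-equivalence theorem as an unproved external ingredient, but that is exactly the standard dependency of this result (it is how \cite{w} proceeds as well), and the remaining details you supply — torsion-freeness of $\phi_t^*\nabla$, the use of $\phi_{-t}$ to get equality of unparametrized geodesic families in both directions, and the regularity of $\{\omega_t\}$ — are handled correctly; so the proposal can stand as a self-contained substitute for the citation.
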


    \subsection{Five-dimensional nilpotent Lie algebras}
 Let \(G\) be a simply connected, nilpotent Lie group of dimension five, with Lie algebra \((\mathfrak{g},[\cdot,\cdot])\). Any left-invariant Riemannian metric on \(G\) corresponds canonically to an inner product \(\langle\cdot,\cdot\rangle\) on \(\mathfrak{g}\).

The following proposition offers a complete classification, up to homothety, of the left-invariant Riemannian metrics on \(G\).

\begin{Proposition} \cite{Fok} \label{fok}
    For every inner product \(\langle\cdot,\cdot\rangle\) on \(\mathfrak{g}\), there exists a positive scalar \(\eta>0\) and an orthonormal basis \(\mathcal{B}=\{v_1,\dots,v_5\}\) for the rescaled inner product \(\eta\langle\cdot,\cdot\rangle\), such that the only non-zero Lie brackets are one of the following ten canonical types:

\begin{enumerate}
  \item Abelian (\(5A_1\)): no non-zero brackets.

  \item \(A_{5,4}\):
  \[
    [v_1,v_3]=\alpha\,v_5,\quad [v_1,v_4]=\beta\,v_5,\quad [v_2,v_3]=\gamma\,v_5,
  \]
  with \(\beta,\gamma>0\), \(\alpha\in\mathbb{R}\).

  \item \(A_{3,1}\oplus2A_1\):
  \[
    [v_1,v_2]=\alpha\,v_5,\quad \alpha>0.
  \]

  \item \(A_{4,1}\oplus A_1\) (type I):
  \[
    [v_1,v_2]=\alpha\,v_3+\gamma\,v_5,\quad [v_1,v_3]=\beta\,v_5,
  \]
  with \(\alpha,\beta>0\), \(\gamma\in\mathbb{R}\).

  \item \(A_{4,1}\oplus A_1\) (type II):
  \[
    [v_1,v_2]=\alpha\,v_3+\gamma\,v_4,\quad [v_1,v_3]=\beta\,v_5,
  \]
  with \(\alpha,\beta>0\), \(\gamma\in\mathbb{R}\).

  \item \(A_{5,6}\):
  \[
  \begin{aligned}
    &[v_1,v_2]=\alpha\,v_3+\beta\,v_4,\\
    &[v_1,v_3]=\gamma\,v_4+\delta\,v_5,\\
    &[v_1,v_4]=\varepsilon\,v_5,\quad [v_2,v_3]=\sigma\,v_5,
  \end{aligned}
  \]
  with \(\alpha<0\), \(\gamma,\varepsilon,\sigma>0\), \(\beta,\delta\in\mathbb{R}\).

  \item \(A_{5,5}\):
  \[
  \begin{aligned}
    &[v_1,v_2]=\alpha\,v_4+\beta\,v_5,\quad [v_1,v_3]=\gamma\,v_5,\\
    &[v_2,v_3]=\delta\,v_5,\quad [v_2,v_4]=\varepsilon\,v_5,
  \end{aligned}
  \]
  with \(\alpha,\gamma,\varepsilon>0\), \(\beta,\delta\in\mathbb{R}\).

  \item \(A_{5,3}\):
  \[
  \begin{aligned}
    &[v_1,v_2]=\alpha\,v_3+\beta\,v_4,\quad [v_1,v_3]=\gamma\,v_4+\delta\,v_5,\\
    &[v_2,v_3]=\varepsilon\,v_5,
  \end{aligned}
  \]
  with \(\alpha,\gamma,\varepsilon>0\), \(\beta,\delta\in\mathbb{R}\).

  \item \(A_{5,1}\):
  \[
    [v_1,v_2]=\alpha\,v_4+\beta\,v_5,\quad [v_1,v_3]=\gamma\,v_5,
  \]
  with \(\alpha,\gamma>0\), \(\beta\in\mathbb{R}\).

  \item \(A_{5,2}\):
  \[
    [v_1,v_2]=\alpha\,v_3+\beta\,v_4,\quad [v_1,v_3]=\gamma\,v_4,\quad [v_1,v_4]=\delta\,v_5,
  \]
  with \(\alpha,\gamma,\delta>0\), \(\beta\in\mathbb{R}\).
\end{enumerate}

Consequently, up to homothety, every left-invariant Riemannian metric on a five-dimensional nilpotent Lie group belongs precisely to one of these ten standard types.

\end{Proposition}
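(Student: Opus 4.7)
My plan is to proceed in two stages: an algebraic reduction (using the known isomorphism classification of five-dimensional nilpotent Lie algebras), followed by a metric normalization in each isomorphism class, exploiting the orthogonal group acting on orthonormal frames together with the single homothety parameter $\eta>0$.

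First, I would invoke the classification of real nilpotent Lie algebras of dimension at most five (due to Dixmier, Magnin, and De~Graaf), which yields exactly ten isomorphism classes of nilpotent Lie algebras of dimension five: the abelian one $5A_1$ and the nine non-abelian algebras appearing in the statement. For each class $\mathfrak{g}$, I would fix a model presentation in abstract generators and then read off the canonical flag of ideals that is intrinsic to $\mathfrak{g}$, namely the ascending central series $\{0\}\subset Z_1(\mathfrak{g})\subset Z_2(\mathfrak{g})\subset\cdots\subset\mathfrak{g}$ together with the derived series and the lower central series. These filtrations are preserved by any Lie algebra automorphism and, in particular, are invariants of the abstract Lie algebra independent of the chosen inner product.

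Given an inner product $\langle\cdot,\cdot\rangle$ on $\mathfrak{g}$, the strategy is then to build an orthonormal basis adapted to this intrinsic flag: I would place $v_5$ (and, when required, $v_4$) in the center or in the last nontrivial term of the lower central series, choose the next $v_i$'s in the orthogonal complement of the previous space inside the next larger subspace, and finally pick $v_1,v_2$ in the orthogonal complement of $[\mathfrak{g},\mathfrak{g}]$. This produces an orthonormal basis in which many structure constants are automatically zero, because brackets must land in the prescribed ideals. The remaining freedom is the compact group $\prod_i O(k_i)$, where $k_i$ are the dimensions of the successive orthogonal complements; I would use this residual orthogonal freedom to diagonalize or triangularize the structure constant matrices appearing in each bracket. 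Typically, a carefully chosen rotation in the plane spanned by $(v_i,v_{i+1})$ can be used to annihilate one coefficient or to force another to be positive, following a Gram--Schmidt-type procedure applied to the images of the brackets.

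Once all coefficients have been reduced to a minimal normal form using orthogonal changes of frame, the last step is to exploit the homothety $\langle\cdot,\cdot\rangle\mapsto\eta\langle\cdot,\cdot\rangle$. Under such a rescaling, an orthonormal basis is multiplied by $1/\sqrt{\eta}$, so the structure constants transform homogeneously by a power of $\sqrt{\eta}$ determined by the length of the bracketing word. I would use this single real parameter to normalize the leading structure constant (say the one associated to the deepest bracket) to a prescribed positive value, or equivalently to absorb an overall scale. The signs and ranges asserted in each case of the proposition (for instance $\alpha<0$ in $A_{5,6}$, or $\beta,\gamma>0$ in $A_{5,4}$) then come out as orientation conventions fixed by the chosen orthogonal rotations.

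The main obstacle will be the case-by-case bookkeeping, especially for $A_{5,6}$, $A_{5,5}$, and $A_{5,3}$, where up to six structure constants appear simultaneously and the residual orthogonal freedom is small after the first few normalizations. For these cases I would need to check carefully that the remaining one-parameter families (e.g.\ $\alpha$, $\beta$, $\delta$) are genuinely moduli and cannot be reduced further; this requires verifying that any additional orthogonal transformation preserving the already-normalized brackets must act trivially on the remaining free parameters. The abelian case and $A_{3,1}\oplus 2A_1$ are immediate, while the intermediate cases $A_{4,1}\oplus A_1$ (types I and II) and $A_{5,1}$, $A_{5,2}$, $A_{5,4}$ follow the same scheme with less combinatorial complexity.
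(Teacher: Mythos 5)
First, note that the paper does not prove this proposition at all: it is imported verbatim from \cite{Fok}, so there is no internal proof to compare against and your outline has to be judged on its own merits as a reconstruction of that reference's argument. Your general strategy (fix the abstract nilpotent Lie algebra, adapt an orthonormal basis to the characteristic ideals, use the residual orthogonal freedom to kill or sign-normalize structure constants, and spend the homothety on one overall scale) is indeed the standard Milnor-type method that such classifications use, so the skeleton is reasonable.

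There is, however, a genuine gap at the very first step. You assert that there are ``exactly ten isomorphism classes'' of five-dimensional real nilpotent Lie algebras, matching the ten items of the proposition. In fact there are nine isomorphism classes; the list has ten entries because the single algebra \(A_{4,1}\oplus A_1\) occurs with two inequivalent orthonormal normal forms (type I and type II), depending on how the given inner product sits relative to the characteristic subspaces (whether the component of \([v_1,v_2]\) orthogonal to \(\alpha v_3\) lies along the center direction \(v_5\) or along the complementary central direction \(v_4\)). A proof organized as ``one canonical form per isomorphism class,'' which is what your plan describes, cannot produce this bifurcation: for that class you must analyze the relative position of \([\mathfrak g,\mathfrak g]\), \(Z(\mathfrak g)\) and their orthogonal complements, show the two configurations are exhaustive, and show neither can be rotated into the other by the residual orthogonal freedom. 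This is precisely the nontrivial metric content of the statement, and it is missing. A secondary slip: under the rescaling \(\langle\cdot,\cdot\rangle\mapsto\eta\langle\cdot,\cdot\rangle\) an orthonormal basis transforms by \(v_i\mapsto v_i/\sqrt{\eta}\) and \emph{all} structure constants scale by the same factor \(\eta^{-1/2}\), not by a power depending on the length of the bracketing word; so the homothety absorbs exactly one overall scale, and your bookkeeping for which constant gets normalized (and why the stated sign ranges such as \(\alpha<0\) in \(A_{5,6}\) or \(\beta,\gamma>0\) in \(A_{5,4}\) are achievable) needs to be redone with that in mind. Finally, the case-by-case reductions that constitute the actual proof are deferred rather than carried out, so as it stands the proposal is a plausible plan rather than a proof.
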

\section{An Algebraic Characterization of Left-Invariant Affine and projective vector fields on Lie Groups}

Let $(G,h)$ be a Riemannian Lie group of dimension \(n\). Denote by \(\mathfrak{X}_{\mathrm{inv}}(G)\) the space of left-invariant vector fields on \(G\), by \((\mathfrak{g},[\cdot,\cdot])\) its Lie algebra, and by \(\nabla\) the Levi-Civita connection induced by the metric \(h\). Throughout, we identify \(\mathfrak{g}\) with \(\mathfrak{X}_{\mathrm{inv}}(G)\).

For each left-invariant vector field \(\xi\) on \(G\), define the endomorphisms
\[
\mathrm{L}_\xi,\quad \mathrm{R}_\xi,\quad \mathrm{J}_\xi:\mathfrak{g}\to\mathfrak{g}
\]
via
\[
\mathrm{L}_\xi(v)=\nabla_\xi v,\qquad
\mathrm{R}_\xi(v)=\nabla_v\xi,\qquad
\mathrm{J}_\xi(v)=\operatorname{ad}^*_v\xi.
\]

By applying the Koszul formula, we derive the following expressions:  
\begin{equation}\label{eq}
\mathrm{L}_\xi = \tfrac12\bigl(\operatorname{ad}_\xi - \operatorname{ad}^*_\xi \bigr) - \tfrac12 \mathrm{J}_\xi,\qquad
\mathrm{R}_\xi = -\tfrac12\bigl(\operatorname{ad}_\xi + \operatorname{ad}^*_\xi \bigr) - \tfrac12 \mathrm{J}_\xi.
\end{equation}

Let us consider the following endomorphism
$ \operatorname{K}_\xi:\mathfrak{g}\longrightarrow \mathfrak{g}$ such that: $$ \operatorname{K}_{\xi}v=(\operatorname{ad}_\xi^*+\operatorname{J}_\xi)(v).$$
From \eqref{eq}, we deduce:
\begin{equation}
\mathrm{L}_\xi = \tfrac12\bigl(\operatorname{ad}_\xi - \operatorname{K}_\xi \bigr) ,\quad
\mathrm{R}_\xi = -\tfrac12\bigl(\operatorname{ad}_\xi + \operatorname{K}_\xi \bigr).
\end{equation}
Let's also consider the following the bilinear maps $\mathscr{T}_\xi, S_\xi:\mathfrak{g}\times\mathfrak{g}\longrightarrow \mathfrak{g}$, such that:
$$S_\xi(v,w)=\operatorname{K}_v\circ \operatorname{ad}_\xi (w)+ \operatorname{K}_w\circ \operatorname{ad}_\xi (v)\, \textbf{and}\;\mathscr{T}_\xi(v,w)=\operatorname{ad}_\xi\circ \operatorname{K}_v(w).$$
\begin{Lemma}\label{L1}
For all $\xi\in \mathfrak{g}$, $S_\xi$ and $\mathscr{T}_\xi$ are symmetric.
\end{Lemma}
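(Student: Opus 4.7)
The plan is to reduce both statements to the single observation that the map $(v,w)\mapsto \operatorname{K}_v w$ is itself symmetric in $v,w$. Once this is proved, the symmetry of $S_\xi$ and $\mathscr{T}_\xi$ follows almost immediately.

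\textbf{Step 1: Symmetry of $\operatorname{K}_v w$.} By the very definition of $\operatorname{J}_v$, for any $v,w\in\mathfrak{g}$ one has
\[
\operatorname{J}_v w \;=\; \operatorname{ad}^{*}_w v.
\]
Plugging this into the definition of $\operatorname{K}$ gives
\[
\operatorname{K}_v w \;=\; \operatorname{ad}^{*}_v w + \operatorname{J}_v w \;=\; \operatorname{ad}^{*}_v w + \operatorname{ad}^{*}_w v,
\]
an expression which is manifestly invariant under the exchange $v\leftrightarrow w$. Hence $\operatorname{K}_v w = \operatorname{K}_w v$ for every $v,w\in\mathfrak{g}$.

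\textbf{Step 2: Symmetry of $\mathscr{T}_\xi$.} Applying $\operatorname{ad}_\xi$ to both sides of the identity established in Step 1 yields
\[
\mathscr{T}_\xi(v,w) \;=\; \operatorname{ad}_\xi\bigl(\operatorname{K}_v w\bigr) \;=\; \operatorname{ad}_\xi\bigl(\operatorname{K}_w v\bigr) \;=\; \mathscr{T}_\xi(w,v),
\]
which is the required symmetry.

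\textbf{Step 3: Symmetry of $S_\xi$.} This is essentially cosmetic: the definition
\[
S_\xi(v,w) \;=\; \operatorname{K}_v\!\circ\!\operatorname{ad}_\xi (w) \;+\; \operatorname{K}_w\!\circ\!\operatorname{ad}_\xi (v)
\]
is already a sum of two terms interchanged by $v\leftrightarrow w$, so $S_\xi(v,w) = S_\xi(w,v)$ without any further input.

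The whole argument is quite short; the only conceptual point worth isolating is Step 1, i.e.\ the recognition that the combination $\operatorname{ad}^{*}_\xi + \operatorname{J}_\xi$ secretly assembles a symmetric bilinear form on $\mathfrak{g}$ because the two summands swap roles under the exchange of arguments. No use of the Jacobi identity or of the Koszul formula is needed beyond what is already built into the definitions of $\operatorname{ad}^*$ and $\operatorname{J}$.
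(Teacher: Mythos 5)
Your proposal is correct and follows essentially the same route as the paper: the symmetry of $S_\xi$ is immediate from its definition, and the symmetry of $\mathscr{T}_\xi$ comes from rewriting $\operatorname{K}_v(w)=\operatorname{ad}^*_v(w)+\operatorname{ad}^*_w(v)$, which is exactly the computation in the paper's proof. Your only (harmless) variation is isolating the symmetry of $(v,w)\mapsto\operatorname{K}_v w$ as a separate preliminary step rather than inlining it.
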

\begin{proof} 
Let $\xi, v, w\in \mathfrak{g}$ one has:
\begin{align*} 
S_\xi(v,w) &= \operatorname{K}_v\circ \operatorname{ad}_\xi (w)+ \operatorname{K}_w\circ \operatorname{ad}_\xi (v) \\ 
&= \operatorname{K}_w\circ \operatorname{ad}_\xi (v)+ \operatorname{K}_v\circ \operatorname{ad}_\xi (w)\\ 
&= S_\xi(w,v).
\end{align*}

\begin{align*} 
\mathscr{T}_\xi(v,w) &= \mathrm{ad}_\xi \left( \operatorname{K}_v(w) \right) \\ 
&= \mathrm{ad}_\xi \left( \mathrm{ad}_v^*(w) + \operatorname{J}_v(w) \right) \\ 
&= \mathrm{ad}_\xi \left( \mathrm{ad}_v^*(w) + \mathrm{ad}_w^*(v) \right)\\  
&= \mathscr{T}_\xi(w,v).
\end{align*}
\end{proof} 
\begin{Proposition} \label{propo}
Let $\xi$ be a left-invariant vector field on $\mathrm{G}$.
\begin{enumerate}
    \item The vector field $\xi$ is Affine if \begin{equation}
S_\xi-\mathscr{T}_\xi=0. \end{equation}
\item The vector field $\xi$ is Projective if There exist $\omega \in \mathfrak{g}^*$ such that:
\begin{equation}
   \tfrac12\big(S_\xi(v,w)-\mathscr{T}_\xi(v,w)\big)=\omega(u)v+\omega(v)u\quad \forall u,v\in \mathfrak{g}. \label{Aeq}
\end{equation}
Here, $\mathfrak{g}^*$ is the dual space of $\mathfrak{g}.$
\end{enumerate}
\end{Proposition}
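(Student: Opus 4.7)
The plan is to reduce both conditions to a single tensorial identity on $\mathfrak{g}$ by evaluating $\mathscr{L}_\xi\nabla$ on arbitrary pairs of left-invariant fields and then comparing the result with the characterization given in the proposition preceding \eqref{proj}. Concretely, for $u,v\in\mathfrak{g}$ I would start from
\[
(\mathscr{L}_\xi \nabla)(u,v) \;=\; [\xi,\nabla_u v] \;-\; \nabla_{[\xi,u]} v \;-\; \nabla_u [\xi,v],
\]
and substitute the compact expression $\nabla_u v=\tfrac12(\operatorname{ad}_u-\operatorname{K}_u)v$ obtained from \eqref{eq} into each of the three terms.

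This substitution produces six summands that naturally split into an $\operatorname{ad}$-part and a $\operatorname{K}$-part. The $\operatorname{ad}$-part reads
\[
\tfrac12\bigl(\operatorname{ad}_\xi\operatorname{ad}_u \,-\, \operatorname{ad}_{[\xi,u]} \,-\, \operatorname{ad}_u\operatorname{ad}_\xi\bigr)v,
\]
which vanishes identically by the Jacobi identity $\operatorname{ad}_{[\xi,u]}=[\operatorname{ad}_\xi,\operatorname{ad}_u]$. Only the $\operatorname{K}$-contribution
\[
\tfrac12\bigl(-\operatorname{ad}_\xi\operatorname{K}_u v \,+\, \operatorname{K}_{[\xi,u]} v \,+\, \operatorname{K}_u\operatorname{ad}_\xi v\bigr)
\]
survives. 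The crucial step is to rewrite the middle term using the symmetry $\operatorname{K}_x y=\operatorname{ad}_x^* y+\operatorname{ad}_y^* x=\operatorname{K}_y x$ (already implicit in Lemma \ref{L1}): this gives $\operatorname{K}_{[\xi,u]} v=\operatorname{K}_v[\xi,u]=\operatorname{K}_v\operatorname{ad}_\xi u$, so that the surviving expression becomes precisely $\tfrac12\bigl(S_\xi(u,v)-\mathscr{T}_\xi(u,v)\bigr)$ by the very definitions of $S_\xi$ and $\mathscr{T}_\xi$.

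Having established the identity $(\mathscr{L}_\xi\nabla)(u,v)=\tfrac12\bigl(S_\xi(u,v)-\mathscr{T}_\xi(u,v)\bigr)$, both parts of the proposition follow at once. By \eqref{aff}, $\xi$ is affine iff $\mathscr{L}_\xi\nabla$ vanishes identically, which is equivalent to $S_\xi=\mathscr{T}_\xi$. By \eqref{proj}, $\xi$ is projective iff $(\mathscr{L}_\xi\nabla)(u,v)=\omega(u)v+\omega(v)u$ for some $\omega\in\mathfrak{g}^*$, which combined with the identity above yields exactly \eqref{Aeq}. The only nontrivial moment in the argument is spotting that the Jacobi identity kills the $\operatorname{ad}$-contribution and that the symmetry of $\operatorname{K}$ converts the Jacobi leftover $\operatorname{K}_{[\xi,u]} v$ into a term fitting the $S_\xi$ pattern; once those two observations are in place, the remainder is straightforward bookkeeping of coefficients and signs.
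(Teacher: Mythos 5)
Your proposal is correct and follows essentially the same route as the paper: expand $(\mathscr{L}_\xi\nabla)(u,v)=[\xi,\nabla_u v]-\nabla_{[\xi,u]}v-\nabla_u[\xi,v]$ via the Koszul expressions, observe that the $\operatorname{ad}$-part cancels by the Jacobi identity, and identify the surviving $\operatorname{K}$-part with $\tfrac12\bigl(S_\xi-\mathscr{T}_\xi\bigr)$ before invoking \eqref{aff} and \eqref{proj}. The only cosmetic difference is that you convert the middle term using the symmetry $\operatorname{K}_{[\xi,u]}v=\operatorname{K}_v\operatorname{ad}_\xi u$, whereas the paper rewrites $\nabla_{[\xi,v]}w$ through the operator $\mathrm{R}_w=-\tfrac12(\operatorname{ad}_w+\operatorname{K}_w)$; these two moves encode the same identity $\operatorname{K}_x y=\operatorname{ad}^*_x y+\operatorname{ad}^*_y x$.
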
 
\begin{proof}
Let $\xi$ be a left-invariant vector field on $\mathrm{G}$ $v, w\in \mathfrak{g}$. We have:
\begin{align} 
(\mathscr{L}_\xi \nabla)(v, w)
&= [\xi,L_v w]-L_{[\xi,v]}w-L_v[\xi,w]\nonumber\\
&= \operatorname{ad}_\xi\circ L_v(w)-R_w{[\xi,v]}-L_v\circ \operatorname{ad}_\xi(w)\nonumber\\
&=\tfrac{1}{2}\operatorname{ad}_\xi\circ \big(\operatorname{ad}_v - \mathrm{K}_v \big)(w)+\tfrac{1}{2}\big( \operatorname{ad}_w+ \mathrm{K}_w\big)\circ \operatorname{ad}_\xi(v)-\tfrac{1}{2}\big(\operatorname{ad}_v - \mathrm{K}_v)\circ \operatorname{ad}_\xi(w)\nonumber\\
&=\tfrac{1}{2} \big(\operatorname{K}_v\circ \operatorname{ad}_\xi (w)+ \operatorname{K}_w\circ \operatorname{ad}_\xi (v)-\operatorname{ad}_\xi\circ \operatorname{K}_v(w) \big)+\tfrac{1}{2} \big(\operatorname{ad}_\xi\circ \operatorname{ad}_v(w)+\operatorname{ad}_w\circ \operatorname{ad}_\xi(v)\nonumber\\
&\qquad\qquad\qquad\qquad\qquad\qquad\qquad\qquad\qquad\qquad\qquad\qquad\quad\qquad-\operatorname{ad}_v\circ \operatorname{ad}_\xi(w)\big).\label{7}
\end{align}
According to Jacobi's identity,

$$\tfrac{1}{2} \big(\operatorname{ad}_\xi\circ \operatorname{ad}_v(w)+\operatorname{ad}_w\circ \operatorname{ad}_\xi(v)-\operatorname{ad}_v\circ \operatorname{ad}_\xi(w)\big)=0.$$
Subsequently, \begin{align}
(\mathscr{L}_\xi \nabla)(v, w) &= \tfrac{1}{2} \big(\operatorname{K}_v\circ \operatorname{ad}_\xi (w)+ \operatorname{K}_w\circ \operatorname{ad}_\xi (v)-\operatorname{ad}_\xi\circ \operatorname{K}_v(w) \big)\nonumber\\
&=\tfrac12\big(S_\xi(v,w)-\mathscr{T}_\xi(v,w)\big).\label{8}
\end{align}
Therefore, $\xi$ is Affine if: \begin{equation*}
S_\xi(v,w)-\mathscr{T}_\xi(v,w)=0.
\end{equation*}
furthermore, $\xi$ is projective if there exists $\omega\in\mathfrak{g}$ such that, \begin{equation}
   \tfrac12\big(S_\xi(v,w)-\mathscr{T}_\xi(v,w)\big)=\omega(u)v+\omega(v)u\quad \forall u,v\in \mathfrak{g}. 
\end{equation}
\end{proof}
\begin{Remark}
    Lemma \ref{L1} to and equation \eqref{8} Show the symmetry of $(\mathscr{L}_\xi \nabla)$ with respect to $u,v\in \mathfrak{g}$.
\end{Remark}
In the following sections, $(\mathrm{G},\mathrm{h})$ denotes a simply connected, nilpotent Lie group of dimension $5$ endowed with a left-invariant Riemannian metric $\mathrm{h}$. Let $(\mathfrak{g},[\cdot,\cdot])$ be its Lie algebra, and $\nabla$ the associated Levi-Civita connection. 
\section{Left-invariant Affine  vector fields on five-dimensional Riemannian Lie groups}
Any left-invariant vector field on $G$ can be written as
\[
\xi = \sum_{\ell=1}^{5} \xi_\ell\,v_\ell
\]
in the basis $\mathcal{B} = \{v_1,\dots,v_5\}$ introduced in Proposition \ref{fok}.

\begin{Lemma}\label{Lem}
In the basis $\mathcal{B}$, identifying the endomorphisms $\operatorname{k}_{v_i}$ and $\operatorname{ad}_\xi$ with their corresponding matrices yields the results presented in Table $1$.

\begin{table}[ht]
\centering
\caption{Explicit full matrix forms of the operators 
 $\operatorname{ad}_\xi$ and $\operatorname{K}_{v_i}$ associated with $5$-dimensional Lie algebras.}
\footnotesize
\adjustbox{max width=\textwidth}{%
\begin{tabular}{@{}l*{5}{c}@{}}
\toprule
\textbf{Algèbre} & $\mathbf{5A_1}$ & $\mathbf{A_{5,4}}$ & $\mathbf{A_{3,1}\oplus2A_1}$ & $\mathbf{A_{4,1}\oplus A_1}$ (I) & $\mathbf{A_{4,1}\oplus A_1}$ (II) \\
\midrule
$\operatorname{ad}_\xi$ 
& $\begin{pmatrix} 0 & 0 & 0 & 0 & 0 \\ 0 & 0 & 0 & 0 & 0 \\ 0 & 0 & 0 & 0 & 0 \\ 0 & 0 & 0 & 0 & 0 \\ 0 & 0 & 0 & 0 & 0 \end{pmatrix}$ 
& $\begin{pmatrix} 0 & 0 & 0 & 0 & 0 \\ 0 & 0 & 0 & 0 & 0 \\ 0 & 0 & 0 & 0 & 0 \\ 0 & 0 & 0 & 0 & 0 \\ -(\alpha\xi_3+\beta\xi_4) & -\gamma\xi_3 & \alpha\xi_1+\gamma\xi_2 & \beta\xi_1 & 0 \end{pmatrix}$ 
& $\begin{pmatrix} 0 & 0 & 0 & 0 & 0 \\ 0 & 0 & 0 & 0 & 0 \\ 0 & 0 & 0 & 0 & 0 \\ 0 & 0 & 0 & 0 & 0 \\ -\alpha\xi_2 & \alpha\xi_1 & 0 & 0 & 0 \end{pmatrix}$ 
& $\begin{pmatrix} 0 & 0 & 0 & 0 & 0 \\ 0 & 0 & 0 & 0 & 0 \\ -\alpha\xi_2 & \alpha\xi_1 & 0 & 0 & 0 \\ 0 & 0 & 0 & 0 & 0 \\ -(\gamma\xi_2+\beta\xi_3) & \gamma\xi_1 & \beta\xi_1 & 0 & 0 \end{pmatrix}$ 
& $\begin{pmatrix} 0 & 0 & 0 & 0 & 0 \\ 0 & 0 & 0 & 0 & 0 \\ -\alpha\xi_2 & \alpha\xi_1 & 0 & 0 & 0 \\ -\gamma\xi_2 & \gamma\xi_1 & 0 & 0 & 0 \\ -\beta\xi_3 & 0 & \beta\xi_1 & 0 & 0 \end{pmatrix}$ \\
\cmidrule(lr){1-6}
$\operatorname{K}_{v_i}$ 
& $0 \quad \forall i$ 
& $\begin{aligned}

\operatorname{K}_{v_1} =& \begin{pmatrix}
0 & 0 & 0 & 0 & 0 \\
0 & 0 & 0 & 0 & 0 \\
0 & 0 & 0 & 0 & \alpha \\
0 & 0 & 0 & 0 & \beta \\
0 & 0 & 0 & 0 & 0\end{pmatrix}\\
\operatorname{K}_{v_2} =& \begin{pmatrix}
0 & 0 & 0 & 0 & 0 \\
0 & 0 & 0 & 0 & 0 \\
0 & 0 & 0 & 0 & \gamma \\
0 & 0 & 0 & 0 & 0 \\
0 & 0 & 0 & 0 & 0\end{pmatrix}\\
\operatorname{K}_{v_3} =& \begin{pmatrix}
0 & 0 & 0 & 0 & -\alpha \\
0 & 0 & 0 & 0 & -\gamma \\
0 & 0 & 0 & 0 & 0 \\
0 & 0 & 0 & 0 & 0 \\
0 & 0 & 0 & 0 & 0\end{pmatrix}\\
\operatorname{K}_{v_4}=&  \begin{pmatrix}
0 & 0 & 0 & 0 & -\beta \\
0 & 0 & 0 & 0 & 0 \\
0 & 0 & 0 & 0 & 0 \\
0 & 0 & 0 & 0 & 0 \\
0 & 0 & 0 & 0 & 0\end{pmatrix}\\
\operatorname{K}_{v_5}= & \begin{pmatrix}
0 & 0 & -\alpha & -\beta & 0 \\
0 & 0 & -\gamma & 0 & 0 \\
\alpha & \gamma & 0 & 0 & 0 \\
\beta & 0 & 0 & 0 & 0 \\
0 & 0 & 0 & 0 & 0\end{pmatrix}\end{aligned}$ 
& $\begin{aligned}
\operatorname{K}_{v_1} =& \begin{pmatrix}
0 & 0 & 0 & 0 & 0 \\
0 & 0 & 0 & 0 & \alpha \\
0 & 0 & 0 & 0 & 0 \\
0 & 0 & 0 & 0 & 0 \\
0 & 0 & 0 & 0 & 0
\end{pmatrix}\\
\operatorname{K}_{v_2} =& \begin{pmatrix}
0 & 0 & 0 & 0 & -\alpha \\
0 & 0 & 0 & 0 & 0 \\
0 & 0 & 0 & 0 & 0 \\
0 & 0 & 0 & 0 & 0 \\
0 & 0 & 0 & 0 & 0
\end{pmatrix}\\
\operatorname{K}_{v_3}=&0,\quad
\operatorname{K}_{v_4}=0\\
\operatorname{K}_{v_5} =& \begin{pmatrix}
0 & -\alpha & 0 & 0 & 0 \\
\alpha & 0 & 0 & 0 & 0 \\
0 & 0 & 0 & 0 & 0 \\
0 & 0 & 0 & 0 & 0 \\
0 & 0 & 0 & 0 & 0
\end{pmatrix}
\end{aligned}$ 
& $\begin{aligned} 
\operatorname{K}_{v_{1}}=&{\renewcommand{\arraystretch}{0.4}\begin{pmatrix} 
0&0&0&0&0\\ 
0&0&\alpha&0&\gamma\\ 
0&0&0&0&\beta\\ 
0&0&0&0&0\\ 
0&0&0&0&0 
\end{pmatrix}}\\ 
\operatorname{K}_{v_{2}}=&{\renewcommand{\arraystretch}{0.4}\begin{pmatrix} 
0&0&-\alpha&0&-\gamma\\ 
0&0&0&0&0\\ 
0&0&0&0&0\\ 
0&0&0&0&0\\ 
0&0&0&0&0 
\end{pmatrix}}\\
\operatorname{K}_{v_{3}}=&{\renewcommand{\arraystretch}{0.4}\begin{pmatrix} 
0&-\alpha&0&0&-\beta\\ 
\alpha&0&0&0&0\\ 
0&0&0&0&0\\ 
0&0&0&0&0\\ 
0&0&0&0&0 
\end{pmatrix}}\\\label{eqn2o} 
\operatorname{K}_{v_{4}}=&0\\\operatorname{K}_{v_5}=&{\renewcommand{\arraystretch}{0.4}\begin{pmatrix} 
0&-\gamma&-\beta&0&0\\ 
\gamma&0&0&0&0\\ 
\beta&0&0&0&0\\ 
0&0&0&0&0\\ 
0&0&0&0&0 
\end{pmatrix}}
\end{aligned}$ 
&$\begin{aligned} 
\operatorname{K}_{v_1} =& 
\begin{pmatrix}
0 & 0 & 0 & 0 & 0 \\
0 & 0 & \alpha & \gamma & 0 \\
0 & 0 & 0 & 0 & \beta \\
0 & 0 & 0 & 0 & 0 \\
0 & 0 & 0 & 0 & 0
\end{pmatrix}\\
\operatorname{K}_{v_2} =& 
\begin{pmatrix}
0 & 0 & -\alpha & -\gamma & 0 \\
0 & 0 & 0 & 0 & 0 \\
0 & 0 & 0 & 0 & 0 \\
0 & 0 & 0 & 0 & 0 \\
0 & 0 & 0 & 0 & 0
\end{pmatrix}\\
\operatorname{K}_{v_3}=& 
\begin{pmatrix}
0 & -\alpha & 0 & 0 & -\beta \\
\alpha & 0 & 0 & 0 & 0 \\
0 & 0 & 0 & 0 & 0 \\
0 & 0 & 0 & 0 & 0 \\
0 & 0 & 0 & 0 & 0
\end{pmatrix}\\
\operatorname{K}_{v_4} =& 
\begin{pmatrix}
0 & -\gamma & 0 & 0 & 0 \\
\gamma & 0 & 0 & 0 & 0 \\
0 & 0 & 0 & 0 & 0 \\
0 & 0 & 0 & 0 & 0 \\
0 & 0 & 0 & 0 & 0
\end{pmatrix}\\
\operatorname{K}_{v_5} =&
\begin{pmatrix}
0 & 0 & -\beta & 0 & 0 \\
0 & 0 & 0 & 0 & 0 \\
\beta & 0 & 0 & 0 & 0 \\
0 & 0 & 0 & 0 & 0 \\
0 & 0 & 0 & 0 & 0
\end{pmatrix}
\end{aligned}$ \\
\midrule
\textbf{Algèbre} & $\mathbf{A_{5,6}}$ & $\mathbf{A_{5,5}}$ & $\mathbf{A_{5,3}}$ & $\mathbf{A_{5,1}}$ & $\mathbf{A_{5,2}}$ \\
\midrule
$\operatorname{ad}_\xi$ 
& $\begin{pmatrix} 0 & 0 & 0 & 0 & 0 \\ 0 & 0 & 0 & 0 & 0 \\ -\alpha\xi_2 & \alpha\xi_1 & 0 & 0 & 0 \\ -\beta\xi_2-\gamma\xi_3 & \beta\xi_1 & \gamma\xi_1 & 0 & 0 \\ -\delta\xi_3-\epsilon\xi_4 & -\sigma\xi_3 & \delta\xi_1+\sigma\xi_2 & \epsilon\xi_1 & 0 \end{pmatrix}$ 
& $\begin{pmatrix} 0 & 0 & 0 & 0 & 0 \\ 0 & 0 & 0 & 0 & 0 \\ 0 & 0 & 0 & 0 & 0 \\ -\alpha\xi_2 & \alpha\xi_1 & 0 & 0 & 0 \\ -(\beta\xi_2+\gamma\xi_3) & \beta\xi_1-\delta\xi_3-\epsilon\xi_4 & \gamma\xi_1+\delta\xi_2 & \epsilon\xi_2 & 0 \end{pmatrix}$ 
& $\begin{pmatrix} 0 & 0 & 0 & 0 & 0 \\ 0 & 0 & 0 & 0 & 0 \\ -\alpha\xi_2 & \alpha\xi_1 & 0 & 0 & 0 \\ -(\beta\xi_2+\gamma\xi_3) & \beta\xi_1 & \gamma\xi_1 & 0 & 0 \\ -\delta\xi_3 & -\epsilon\xi_3 & \delta\xi_1+\epsilon\xi_2 & 0 & 0 \end{pmatrix}$ 
& $\begin{pmatrix} 0 & 0 & 0 & 0 & 0 \\ 0 & 0 & 0 & 0 & 0 \\ 0 & 0 & 0 & 0 & 0 \\ -\alpha\xi_2 & \alpha\xi_1 & 0 & 0 & 0 \\ -(\beta\xi_2+\gamma\xi_3) & \beta\xi_1 & \gamma\xi_1 & 0 & 0 \end{pmatrix}$ 
& $\begin{pmatrix} 0 & 0 & 0 & 0 & 0 \\ 0 & 0 & 0 & 0 & 0 \\ -\alpha\xi_2 & \alpha\xi_1 & 0 & 0 & 0 \\ -(\beta\xi_2+\gamma\xi_3) & \beta\xi_1 & \gamma\xi_1 & 0 & 0 \\ -\delta\xi_4 & 0 & \delta\xi_1 & 0 & 0 \end{pmatrix}$ \\
\cmidrule(lr){1-6}
$\operatorname{K}_{v_i}$ 
& $\begin{aligned}
\operatorname{K}_{v_1}=& 
\begin{pmatrix}
0 & 0 & 0 & 0 & 0 \\
0 & 0 & \alpha & \beta & 0 \\
0 & 0 & 0 & \gamma & \delta \\
0 & 0 & 0 & 0 & \varepsilon \\
0 & 0 & 0 & 0 & 0
\end{pmatrix}\\ \operatorname{K}_{v_2} =&
\begin{pmatrix}
0 & 0 & -\alpha & -\beta & 0 \\
0 & 0 & 0 & 0 & 0 \\
0 & 0 & 0 & 0 & \sigma \\
0 & 0 & 0 & 0 & 0 \\
0 & 0 & 0 & 0 & 0
\end{pmatrix}\\ \operatorname{K}_{v_3} =&
\begin{pmatrix}
0 & -\alpha & 0 & -\gamma & -\delta \\
\alpha & 0 & 0 & 0 & -\sigma \\
0 & 0 & 0 & 0 & 0 \\
0 & 0 & 0 & 0 & 0 \\
0 & 0 & 0 & 0 & 0
\end{pmatrix}\\
 \operatorname{K}_{v_4} = &
\begin{pmatrix}
0 &  -\beta & -\gamma &0& -\varepsilon \\
\beta & 0 & 0 & 0 & 0 \\
\gamma & 0 & 0 & 0 & 0 \\
0 & 0 & 0 & 0 & 0 \\
0 & 0 & 0 & 0 & 0
\end{pmatrix}\\ \operatorname{K}_{v_5}=&
\begin{pmatrix}
0 & 0 & -\delta & -\varepsilon & 0 \\
0 & 0 & -\sigma & 0 & 0 \\
\delta & \sigma & 0 & 0 & 0 \\
\varepsilon & 0 & 0 & 0 & 0 \\
0 & 0 & 0 & 0 & 0
\end{pmatrix}
\end{aligned}$ 
& $\begin{aligned}
\operatorname{K}_{v_1} &= 
\begin{pmatrix}
0&0&0&0&0\\
0&0&0&\alpha&\beta\\
0&0&0&0&\gamma\\
0&0&0&0&0\\
0&0&0&0&0
\end{pmatrix}\\
\operatorname{K}_{v_2} &=
\begin{pmatrix}
0&0&0&-\alpha&-\beta\\
0&0&0&0&0\\
0&0&0&0&\delta\\
0&0&0&0&\varepsilon\\
0&0&0&0&0
\end{pmatrix}\\
\operatorname{K}_{v_3} &=
\begin{pmatrix}
0&0&0&0&-\gamma\\
0&0&0&0&-\delta\\
0&0&0&0&0\\
0&0&0&0&0\\
0&0&0&0&0
\end{pmatrix}\\
\operatorname{K}_{v_4} &=
\begin{pmatrix}
0&-\alpha&0&0&0\\
\alpha&0&0&0&-\varepsilon\\
0&0&0&0&0\\
0&0&0&0&0\\
0&0&0&0&0
\end{pmatrix}\\
\operatorname{K}_{v_5} &=
\begin{pmatrix}
0&-\beta&-\gamma&0&0\\
\beta&0&-\delta&-\varepsilon&0\\
\gamma&\delta&0&0&0\\
0&\varepsilon&0&0&0\\
0&0&0&0&0
\end{pmatrix}.
\end{aligned}$ 
& $\begin{aligned}
\operatorname{K}_{v_1} =&
\begin{pmatrix}
0 & 0 & 0 & 0 & 0 \\
0 & 0 & 0 & \alpha & \beta \\
0 & 0 & 0 & 0 & \gamma \\
0 & 0 & 0 & 0 & 0 \\
0 & 0 & 0 & 0 & 0
\end{pmatrix}\\
\operatorname{K}_{v_2} =& \begin{pmatrix}
0 &0 &-\alpha &-\beta &0\\
0 &0 &0 &0 &0\\
0 &0 &0 &0 &\varepsilon\\
0 &0 &0 &0 &0\\
0 &0 &0 &0 &0
\end{pmatrix}\\ \operatorname{K}_{v_3} =& \begin{pmatrix}
0 & -\alpha & 0 & -\gamma & -\delta\\
\alpha & 0 & 0 & 0 & -\varepsilon\\
0 & 0 & 0 & 0 & 0\\
0 & 0 & 0 & 0 & 0\\
0 & 0 & 0 & 0 & 0
\end{pmatrix}\\
\operatorname{K}_{v_4} =& \begin{pmatrix}
0 & -\beta & -\gamma & 0 & 0\\
\beta & 0 & 0 & 0 & 0\\
\gamma & 0 & 0 & 0 & 0\\
0 & 0 & 0 & 0 & 0\\
0 & 0 & 0 & 0 & 0
\end{pmatrix}\\
\operatorname{K}_{v_5} =& \begin{pmatrix}
0 &0 & -\delta & 0 &0\\
0 &0 & -\varepsilon &0 &0\\
\delta & \varepsilon & 0 &0 &0\\
0 &0 &0 &0 &0\\
0 &0 &0 &0 &0
\end{pmatrix}
\end{aligned}$ 
& $\begin{aligned}
\operatorname{K}_{v_1} =&
\begin{pmatrix}
0 & 0 & 0 & 0 & 0 \\
0 & 0 & 0 & \alpha & \beta \\
0 & 0 & 0 & 0 & \gamma \\
0 & 0 & 0 & 0 & 0 \\
0 & 0 & 0 & 0 & 0
\end{pmatrix}\\
\operatorname{K}_{v_2} =&
\begin{pmatrix}
0 & 0 & 0 & -\alpha & -\beta \\
0 & 0 & 0 & 0 & 0 \\
0 & 0 & 0 & 0 & 0 \\
0 & 0 & 0 & 0 & 0 \\
0 & 0 & 0 & 0 & 0
\end{pmatrix}\\
\operatorname{K}_{v_3} =&
\begin{pmatrix}
0 & 0 & 0 & 0 & -\gamma \\
0 & 0 & 0 & 0 & 0 \\
0 & 0 & 0 & 0 & 0 \\
0 & 0 & 0 & 0 & 0 \\
0 & 0 & 0 & 0 & 0
\end{pmatrix}\\
\operatorname{K}_{v_4} =&
\begin{pmatrix}
0 & -\alpha & 0 & 0 & 0 \\
\alpha & 0 & 0 & 0 & 0 \\
0 & 0 & 0 & 0 & 0 \\
0 & 0 & 0 & 0 & 0 \\
0 & 0 & 0 & 0 & 0
\end{pmatrix}\\
\operatorname{K}_{v_5} =&
\begin{pmatrix}
0 & -\beta & -\gamma & 0 & 0 \\
\beta & 0 & 0 & 0 & 0 \\
\gamma & 0 & 0 & 0 & 0 \\
0 & 0 & 0 & 0 & 0 \\
0 & 0 & 0 & 0 & 0
\end{pmatrix}
\end{aligned}$
&$\begin{aligned}
 \operatorname{K}_{v_1} =&
\begin{pmatrix}
0 & 0 & 0 & 0 & 0 \\
0 & 0 & \alpha & \beta & 0 \\
0 & 0 & 0 & \gamma & 0 \\
0 & 0 & 0 & 0 & \delta \\
0 & 0 & 0 & 0 & 0
\end{pmatrix}\\
\operatorname{K}_{v_2} =&
\begin{pmatrix}
0 & 0 & -\alpha & -\beta & 0 \\
0 & 0 & 0 & 0 & 0 \\
0 & 0 & 0 & 0 & 0 \\
0 & 0 & 0 & 0 & 0 \\
0 & 0 & 0 & 0 & 0
\end{pmatrix}\\
\operatorname{K}_{v_3} =&
\begin{pmatrix}
0 & -\alpha & 0 & -\gamma & 0 \\
\alpha & 0 & 0 & 0 & 0 \\
0 & 0 & 0 & 0 & 0 \\
0 & 0 & 0 & 0 & 0 \\
0 & 0 & 0 & 0 & 0
\end{pmatrix}\\
\operatorname{K}_{v_4} =&
\begin{pmatrix}
0 & -\beta & -\gamma & 0 & -\delta \\
\beta & 0 & 0 & 0 & 0 \\
\gamma & 0 & 0 & 0 & 0 \\
0 & 0 & 0 & 0 & 0 \\
0 & 0 & 0 & 0 & 0
\end{pmatrix}\\
\operatorname{K}_{v_5} =&
\begin{pmatrix}
0 & 0 & 0 & -\delta & 0 \\
0 & 0 & 0 & 0 & 0 \\
0 & 0 & 0 & 0 & 0 \\
\delta & 0 & 0 & 0 & 0 \\
0 & 0 & 0 & 0 & 0
\end{pmatrix}   
\end{aligned}$
\\
\bottomrule
\end{tabular}%
}
\end{table}
\end{Lemma}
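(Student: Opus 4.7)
The plan is to prove the lemma by direct case-by-case verification for each of the ten canonical algebras in Proposition~\ref{fok}. The two key ingredients that make the ten calculations essentially mechanical are: (i) linearity of $\operatorname{ad}_\xi$ in $\xi$, which reduces $\operatorname{ad}_\xi(v_j)=\sum_\ell \xi_\ell[v_\ell,v_j]$ to a finite sum of the listed structure brackets; and (ii) the fact that since $\mathcal{B}$ is orthonormal for $\langle\cdot,\cdot\rangle$, the matrix of $\operatorname{ad}^*_v$ in $\mathcal{B}$ is exactly the transpose of the matrix of $\operatorname{ad}_v$. Thus the problem is reduced to reading off structure constants from Proposition~\ref{fok}.

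For $\operatorname{ad}_\xi$: I would fix one of the ten algebra types, write $\xi=\sum_\ell \xi_\ell v_\ell$, and compute each column of $\operatorname{ad}_\xi$ by evaluating $[\xi,v_j]$ using the short list of non-trivial brackets for that type together with antisymmetry. Since at most four of the ten brackets $[v_i,v_j]$ are non-zero in any single canonical type, each column contains at most a handful of non-zero entries, and the matrix patterns in the table can be read off line by line.

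For $\operatorname{K}_{v_i}$: starting from $\operatorname{K}_{v_i}(v_j)=\operatorname{ad}^*_{v_i}(v_j)+\operatorname{ad}^*_{v_j}(v_i)$ and writing $[v_a,v_b]=\sum_m c^m_{ab}v_m$, orthonormality gives the entry-level formula
\begin{equation*}
(\operatorname{K}_{v_i})_{kj} \;=\; \langle\operatorname{K}_{v_i}(v_j),v_k\rangle \;=\; \langle v_j,[v_i,v_k]\rangle+\langle v_i,[v_j,v_k]\rangle \;=\; c^{\,j}_{ik}+c^{\,i}_{jk}.
\end{equation*}
This makes the symmetry observation of Lemma~\ref{L1} manifest (swapping $i$ and $j$ transposes $\operatorname{K}$-matrices in a predictable way) and also explains immediately why $\operatorname{K}_{v_i}=0$ in cases such as $5A_1$ (no non-trivial brackets) and in those slots where $v_i$ is central (e.g.\ $\operatorname{K}_{v_3}=\operatorname{K}_{v_4}=0$ in $A_{3,1}\oplus 2A_1$). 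The explicit block shape of each $\operatorname{K}_{v_i}$—skew in the non-central rows/columns and zero elsewhere—also follows from this formula: any non-zero entry $c^j_{ik}$ forces $v_j$ to lie in the derived algebra, which in all ten types is spanned only by some tail of $\mathcal{B}$.

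The main obstacle is not mathematical depth but book-keeping: ten algebras, five matrices $\operatorname{K}_{v_i}$ each, with several sign bookkeeping steps coming from the $c^j_{ik}+c^i_{jk}$ formula and the antisymmetry of the bracket. I would carry this out by organizing the calculation into a single table of structure constants $c^m_{ab}$ per algebra, then reading off each entry of each $\operatorname{K}_{v_i}$-matrix from that table; a representative verification for a non-trivial case such as $A_{5,6}$ (with all six parameters active) would be written out in full, and the remaining nine cases follow the identical template. The resulting matrices match those recorded in Table~1, completing the proof.
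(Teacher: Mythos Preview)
Your proposal is correct and follows the same approach as the paper, which in fact provides no explicit proof of this lemma at all: the table is simply stated as the outcome of a direct computation from the definitions of $\operatorname{ad}_\xi$ and $\operatorname{K}_{v_i}=\operatorname{ad}^*_{v_i}+\operatorname{J}_{v_i}$ together with the structure constants of Proposition~\ref{fok}. Your entry-level formula $(\operatorname{K}_{v_i})_{kj}=c^{\,j}_{ik}+c^{\,i}_{jk}$ is a clean organizational device that the paper does not make explicit, but it is exactly the computation implicit in the paper's definitions once one uses orthonormality of $\mathcal{B}$.
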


Using the previous Lemma \ref{Lem} and Proposition \ref{propo}, we obtain the following result.

\begin{Lemma} \label{lem2}
    In the basis \(\mathcal{B}=\{v_1,\dots,v_5\}\), We have the following:

\begin{enumerate}
  \item Abelian (\(5A_1\)):\\

  For every $i, j\in \{1,2,3,4,5\},$
  
\begin{equation*}
(\mathscr{L}_\xi\nabla)(v_i,v_j)=0.
\end{equation*}
  \item \(A_{5,4}\):\\
  
  We define the coefficients:
\[
c_1 = -(\alpha\xi_3 + \beta\xi_4), \quad
c_2 = -\gamma\xi_3, \quad
c_3 = \alpha\xi_1 + \gamma\xi_2, \quad
c_4 = \beta\xi_1
\]
and the vectors:
\[
a_i =
\begin{cases} 
\alpha v_3 + \beta v_4 & i=1 \\ 
\gamma v_3 & i=2 \\ 
-\alpha v_1 - \gamma v_2 & i=3 \\ 
-\beta v_1 & i=4.
\end{cases}
\]

\begin{enumerate}

\item For $i, j\in \{1,2,3,4\}$.\\

In this case,
\begin{equation*}
(\mathscr{L}_\xi\nabla)(v_i,v_j)=\tfrac{1}{2}\big(c_ja_i+c_ia_j\big).
\end{equation*}

\item For $(i,j)=(k,5),\quad k\in\{1,2,3,4,5\}$.\\ 

\[
(\mathscr{L}_\xi\nabla)(v_k,v_5)=
\begin{cases} 
\tfrac{1}{2}(\alpha c_3+ \beta c_4) v_5 & k=1 \\ 
\tfrac{1}{2}\gamma c_3 v_5 & k=2 \\ 
-\tfrac{1}{2}(\alpha c_1 + \gamma c_2) v_5 & k=3 \\ 
-\tfrac{1}{2}\beta c_1 v_5 & k=4\\
0&k=5.
\end{cases}
\]

\end{enumerate}

  \item \(A_{3,1}\oplus2A_1\):\\
\begin{enumerate}
    \item For $(i, j)\in \{1,2,3,4,5\}\times\{3,4,5\}$,

    $$(\mathscr{L}_\xi\nabla)(v_i,v_j)=0$$
    \item For$i,j \in \{1,2\}$
    
    \[
(\mathscr{L}_\xi\nabla)(v_i,v_j) = 
\begin{cases}
	\alpha d_1v_2  & i=j=1 \\
	-\alpha  d_2 v_1 & i=j=2 \\
	\tfrac{1}{2}(-\alpha d_1 v_1 +d_2\alpha v_2) & i=1,\; j=2.
\end{cases}
\]
\item For $(i,j) \in \{(v_k,v_5),\; k=1,2\}.$

    \[
(\mathscr{L}_\xi\nabla)(v_k,v_5) =
\begin{cases}
	-\tfrac{1}{2}\alpha d_2v_5  & k=1 \\
	\tfrac{1}{2}\alpha d_1v_5 & k=2. 
\end{cases}
\]
\end{enumerate}  \text{With $d_1=-\alpha\xi_2$, $d_2=\alpha\xi_1$}.

\item  \(A_{4,1}\oplus A_1\) (type I):

Let's consider:
$$a_1=-\alpha\xi_2,\;a_2=\alpha\xi_1,\;a_k=0 \; \forall k\in \{1,2,3\}.$$
And,
$$b_1=-(\gamma\xi_2 + \beta\xi_3) ,\;b_2=\gamma\xi_1,\;b_3=\beta\xi_1,\;b_k=0 \; \forall k\in \{1,2,3\}.$$
\[ g_i=\begin{cases} 
\alpha v_2 &i=1\\ 
-\alpha v_1&i=2\\ 
0 &i=3,4,5 
\end{cases},\quad h_i=\begin{cases} 
\gamma v_2+\beta v_3 &i=1\\ 
-\gamma v_1&i=2\\ 
-\beta v_1&i=3\\ 
0 &i=4,5 .
\end{cases}\]
\begin{enumerate} 
\item For all $k\in\{1,2,3,4,5\}$,
\begin{equation}(\mathscr{L}_\xi\nabla)(v_k,v_4)=0. \end{equation}
\item For$i,j\in \{1,2\}$. 

\begin{align*} 
(\mathscr{L}_\xi\nabla)(v_i,v_j)=\tfrac{1}{2}\Big(a_j g_i+b_j h_i+ a_i g_j+b_i h_j\Big). 
\end{align*}
\item For $i\in\{1,2,3\}$, and $j=3$.\\  
\[
(\mathscr{L}_\xi\nabla)(v_i,v_3) =
\begin{cases} 
\tfrac{1}{2}\Big(\beta(\gamma\xi_2+\beta\xi_3)v_1+\beta\gamma\xi_1 v_2+(\beta^2-\alpha^2)\xi_1 v_3-\alpha\gamma\xi_1 v_5\Big) & i=1 \\ 
-\tfrac{1}{2}\Big(2\beta\gamma\xi_1 v_1+\alpha^2\xi_2 v_3 +\alpha(\gamma\xi_2+\beta\xi_3) v_5\Big)& i=2\\
-\beta^2\xi_1 v_1&i=3.
\end{cases}
\]
\item For $i\in \{1,2,3,5\}$ and $j=5.$

\[
(\mathscr{L}_\xi\nabla)(v_i,v_5) =
\begin{cases} 
\tfrac{1}{2}\Big(\alpha\beta\xi_2v_1-\alpha\gamma\xi_1 v_3+(\beta^2+\gamma^2)\xi_1 v_5\Big) & i=1 \\ 
-\tfrac{1}{2}\Big(2\beta\gamma\xi_1 v_1+\alpha^2\xi_2 v_3 +\alpha(\gamma\xi_2+\beta\xi_3) v_5\Big)& i=2\\
\tfrac{1}{2}\beta\Big(-\alpha\xi_2v_3-(\gamma\xi_2+\beta\xi_3)v_5\Big)&i=3\\
0&i=5.
\end{cases}
\]
\end{enumerate}

  \item \(A_{4,1}\oplus A_1\) (type II):\\
  \begin{enumerate}
      \item  For $i,j\in\{1,2\}$,

      \begin{equation*} 
( \mathscr{L}_\xi\nabla)(v_i,v_j)=\begin{cases} 
-(\alpha^2+\gamma^2)\xi_2v_2-\beta^2\xi_3v_3 & i=j=1\\ 
-(\alpha^2+\gamma^2)\xi_1v_1 & i=j=2\\ 
\tfrac{1}{2}\Big((\alpha^2+\gamma^2)\xi_2v_1+(\alpha^2+\gamma^2)\xi_1v_2\Big) & i=1,\; j=2. 
\end{cases}
\end{equation*}
  
\item For all $k\in\{1,2,3\}$,
\begin{equation*} (\mathscr{L}_\xi\nabla)(v_k,v_3)=\begin{cases} 
\tfrac{1}{2}(\beta^2\xi_3v_1+(\beta^2-\alpha^2)\xi_1v_3-\alpha\gamma\xi_1 v_4) & k=1\\ 
-\tfrac{1}{2}\gamma(\alpha\xi_2v_3+\gamma\xi_2v_4+\beta\xi_3v_5)& k=2 \\ 
-\beta^2\xi_1v_1 & k=3.
\end{cases} \end{equation*}

\item For $k\in\{3,4,5\}$,
\begin{equation*}(\mathscr{L}_\xi\nabla)(v_k,v_4)=0,\quad \forall k\in\{3,4,5\}. \end{equation*}

\item For $k\in\{1,2\}$,
\begin{equation*} (\mathscr{L}_\xi\nabla)(v_k,v_4)=\begin{cases} 
-\tfrac{1}{2}\gamma(\alpha\xi_1v_3+\gamma\xi_1v_4) & k=1\\ 
\tfrac{1}{2}\gamma(-\alpha\xi_2v_3-\gamma\xi_2v_4-\beta\xi_3v_5)& k=2.
\end{cases} \end{equation*}

\item For $k\in \{1,2,3,4,5\}$, 
\begin{equation*} (\mathscr{L}_\xi\nabla)(v_k,v_5)=\begin{cases} 
\tfrac{1}{2}(\alpha\beta\xi_2v_1-\beta^2\xi_1v_5) & k=1\\ 
-\tfrac{1}{2}\alpha\beta\xi_1v_1& k=2 \\ 
-\tfrac{1}{2}(\alpha\beta\xi_2v_3+\beta\gamma\xi_2 v_4+\beta^2\xi_3v_5) & k=3\\ 
0 & k=5.
\end{cases} \end{equation*}
\end{enumerate}
  \item \(A_{5,6}\):\\

  We set:
\[\begin{cases} 
a_1=-\alpha\,\xi_2,\; 
a_2=\alpha\,\xi_1\; \text{and $a_i=0$}\; \forall i\in \{3,4,5\}\\ 
b_1=-\beta\,\xi_2 - \gamma\,\xi_3,\; 
b_2=\beta\,\xi_1, \; b_3=\gamma\,\xi_1\; \text{and $b_i=0$}\; \forall i\in \{4,5\}\\ 
c_1=-\delta\,\xi_3 - \varepsilon\,\xi_4,\; 
c_2=-\sigma\,\xi_3, \; c_3=\delta\,\xi_1 + \sigma\,\xi_2,\; c_4=\varepsilon\,\xi_1\; \text{and $c_5=0$.}
\end{cases}\]

Then,

\begin{equation} 
(\mathscr{L}_\xi\nabla) (v_1,v_1)=(\alpha a_1+\beta b_1)v_2+(\gamma b_1+\delta c_1)v_3+\varepsilon c_1 v_4.\end{equation} 
\begin{equation}(\mathscr{L}_\xi\nabla) (v_1,v_2)=\tfrac{1}{2}\Big(-(\alpha a_1+\beta b_1)v_1+(\alpha a_2+\beta b_2)v_2+(\gamma b_2+\delta c_2+\sigma c_1)v_3+\varepsilon c_2 v_4\Big).\end{equation} 
\begin{equation}(\mathscr{L}_\xi\nabla) (v_1,v_3)=-\tfrac{1}{2}\Big(-(\gamma b_1+\delta c_1)v_1+(\beta b_3-\sigma c_1)v_2+(\gamma b_3+\delta c_3)v_3+(\varepsilon c_3-\alpha b_3) v_4-\alpha c_3v_5\Big).\end{equation}
\begin{equation}(\mathscr{L}_\xi\nabla) (v_1,v_4)=\tfrac{1}{2}\Big(-(\gamma b_1+\varepsilon c_1)v_1+(\delta c_4-\beta a_2)v_3+(\beta b_2 +\gamma b_3+\varepsilon c_4) v_4-(\beta c_2+\gamma c_3)v_5\Big).\end{equation}
\begin{equation}(\mathscr{L}_\xi\nabla) (v_1,v_5)=-\tfrac{1}{2}\Big((\delta a_1+\varepsilon b_1)v_1+\sigma a_1v_2+\delta b_3 v_4+(\delta c_3+\varepsilon c_4)v_5\Big).\end{equation} 
\begin{equation}(\mathscr{L}_\xi\nabla) (v_2,v_2)=-(\alpha a_2+\beta b_2)v_1+\sigma c_2 v_3.\end{equation}
\begin{equation}(\mathscr{L}_\xi\nabla) (v_2,v_3)=-\tfrac{1}{2}\Big(-(\beta b_3+\gamma b_2+\delta c_2)v_1-\sigma c_2v_2+(\sigma c_3+\alpha a_1)v_3+\alpha b_1 v_4+\alpha c_1v_5\Big).\end{equation} 
\begin{equation}(\mathscr{L}_\xi\nabla) (v_2,v_4)=\tfrac{1}{2}\Big(-(\beta a_2+\gamma b_2+\varepsilon c_2)v_1+(\beta a_1+\sigma c_4)v_3+\beta b_1 v_4+\beta c_1v_5\Big).\end{equation}
\begin{equation}(\mathscr{L}_\xi\nabla) (v_2,v_5)=\tfrac{1}{2}\Big(-(\delta a_2+\varepsilon b_2)v_1-\sigma a_2 v_2 -\sigma b_3 v_4-\sigma c_3v_5\Big).\end{equation}
\begin{equation}(\mathscr{L}_\xi\nabla) (v_3,v_3)=-(\gamma b_3+\delta c_3)v_1-\sigma c_3v_2.\label{A563}\end{equation}
\begin{equation}(\mathscr{L}_\xi\nabla) (v_3,v_4)=\tfrac{1}{2}\Big(-(\gamma b_3+\varepsilon c_3+\delta c_4 )v_1-\sigma c_4 v_2+\gamma a_1 v_3+\gamma b_1 v_4+\gamma c_1v_5\Big).\end{equation}
\begin{equation}(\mathscr{L}_\xi\nabla) (v_3,v_5)=\tfrac{1}{2}\Big(-\varepsilon b_3v_1+(\delta a_1+\sigma a_2)v_3+(\delta b_1 +\sigma b_2)v_4+(\delta c_1+\sigma c_2)v_5\Big).\end{equation}
\begin{equation}(\mathscr{L}_\xi\nabla) (v_4,v_4)=-\varepsilon c_4 v_1.\label{A564}\end{equation}
\begin{equation}(\mathscr{L}_\xi\nabla) (v_4,v_5)=\tfrac{1}{2}\Big(\varepsilon a_1 v_3 + \varepsilon b_1 v_4 +\varepsilon c_1 v_5\Big).\end{equation}
\begin{equation}(\mathscr{L}_\xi\nabla) (v_5,v_5)=0.\label{A565}
\end{equation}

  \item \(A_{5,5}\):\\
  
 Let us consider:
\[\begin{cases} 
a_1=-\alpha\xi_2,\; 
a_2=\alpha\xi_1\; \text{and $a_i=0$}\; \forall i\in \{3,4,5\}\\ 
b_1=-\beta\xi_2 - \gamma\xi_3,\; 
b_2= \beta\xi_1 - \delta\xi_3 - \varepsilon\xi_4, \; b_3=\gamma\xi_1 + \delta\xi_2,\; b_4=\varepsilon\xi_2,\; \text{and $b_5=0$}.
\end{cases}\]

Therefore,

\begin{equation} 
(\mathscr{L}_\xi\nabla) (v_1,v_1)=(\alpha a_1+\beta b_1)v_2+\gamma b_1v_3.\label{(v_1,v_1)}\end{equation}
\begin{equation}(\mathscr{L}_\xi\nabla) (v_1,v_2)=\tfrac{1}{2}\Big(-(\alpha a_1+\beta b_1)v_1+(\alpha a_2+\beta b_2)v_2+(\delta b_1+\gamma b_2)v_3+\varepsilon b_1v_4\Big).\end{equation} 
\begin{equation}(\mathscr{L}_\xi\nabla) (v_1,v_3)=\tfrac{1}{2}\Big(-\gamma b_1 v_1+(\beta b_3-\delta b_1)v_2+\gamma b_3v_3\Big).\end{equation}
\begin{equation}(\mathscr{L}_\xi\nabla) (v_1,v_4)=\tfrac{1}{2}\Big((\beta b_4-\varepsilon b_1)v_2+\gamma b_4 v_3-\alpha a_2v_4 -\alpha b_2v_5\Big).\end{equation} 
\begin{equation}(\mathscr{L}_\xi\nabla) (v_1,v_5)=-\tfrac{1}{2}\Big(\varepsilon a_1v_2+\beta a_2v_4+(\beta b_2+\gamma b_3)v_5\Big).\end{equation} 
\begin{equation}(\mathscr{L}_\xi\nabla) (v_2,v_2)=-(\alpha a_2+\beta b_2)v_1+\delta b_2v_3+\varepsilon b_2 v_4.\label{(v_2,v_2)}\end{equation} 
\begin{equation}(\mathscr{L}_\xi\nabla) (v_2,v_3)=\tfrac{1}{2}\Big(-(\beta b_3+\gamma b_2)v_1-\delta b_2v_2+\delta b_3v_3+\varepsilon b_3 v_4\Big).\end{equation}
\begin{equation}(\mathscr{L}_\xi\nabla) (v_2,v_4)=\tfrac{1}{2}\Big(-\beta b_4v_1-\varepsilon b_2 v_2+\delta b_4v_3+(\varepsilon b_4 +\alpha a_1 )v_4+\alpha b_1v_5\Big).\end{equation}
\begin{equation}(\mathscr{L}_\xi\nabla) (v_2,v_5)=\tfrac{1}{2}\Big(-\varepsilon a_2v_2+\beta a_1 v_4 -(-\beta b_1 +\delta b_3 +\varepsilon b_4)v_5\Big).\end{equation}
\begin{equation}(\mathscr{L}_\xi\nabla) (v_3,v_3)=-\gamma b_3v_1-\delta b_3v_2.\label{(v_3,v_3)}\end{equation}
\begin{equation}(\mathscr{L}_\xi\nabla) (v_3,v_4)=-\tfrac{1}{2}\Big(\gamma b_4 v_1+(\delta b_4+\varepsilon b_3) v_2\Big).\end{equation} 
\begin{equation}(\mathscr{L}_\xi\nabla) (v_3,v_5)=\tfrac{1}{2}\Big((\gamma a_1 +\delta a_2)v_4+(\gamma b_1+\delta b_2)v_5\Big).\end{equation}
\begin{equation}(\mathscr{L}_\xi\nabla) (v_4,v_4)=-\varepsilon b_4 v_5.\label{(v_4,v_4)}\end{equation}
\begin{equation}(\mathscr{L}_\xi\nabla) (v_4,v_5)=\tfrac{1}{2}\Big(\varepsilon a_2 v_4 + \varepsilon b_2 v_5\Big).\end{equation}
\begin{equation}(\mathscr{L}_\xi\nabla) (v_5,v_5)=0.\label{(v_5,v_5)}
\end{equation}

  \item \(A_{5,3}\):\\
  
  We consider:
\[\begin{cases} 
a_1=-\alpha\xi_2,\; a_2=\alpha\xi_1,\; 
\text{and $a_i=0$}\; \forall i\in \{3,4,5\}\\ 
b_1=- \beta\xi_2-\gamma\xi_3 ,\; 
b_2= \beta\xi_1, \; b_3=\gamma\xi_1\; \text{and $b_i=0$} \;\forall i\in \{4,5\}\\ 
c_1=-\delta\xi_3,\; 
c_2= -\varepsilon\xi_3, \; c_3=\delta\xi_1 + \varepsilon\xi_2\; \text{and $c_i=0$} \;\forall i\in \{4,5\}.
\end{cases}\]

\begin{equation}
(\mathscr{L}_\xi\nabla) (v_1,v_1)=(\alpha a_1+\beta b_1)v_2+(\gamma b_1+\delta c_1)v_3.\label{A(v_1,v_1)}    
\end{equation} 
\begin{equation}
 (\mathscr{L}_\xi\nabla) (v_1,v_2)=\tfrac{1}{2}\Big(-(\alpha a_1+\beta b_1)v_1+(\alpha a_2+\beta b_2)v_2+(\gamma b_2+\delta c_2+\varepsilon c_1)v_3\Big).   
\end{equation}
\begin{equation}
(\mathscr{L}_\xi\nabla) (v_1,v_3)=\tfrac{1}{2}\Big(-(\gamma b_1 +\delta c_1)v_1+(\beta b_3-\varepsilon c_1)v_2+(\gamma b_3+\delta c_3-\alpha a_2)v_3-\alpha b_2v_4-\alpha c_2 v_5\Big).\label{A(v_1,v_3)}
\end{equation}
\begin{equation}
 (\mathscr{L}_\xi\nabla) (v_1,v_4)=-\tfrac{1}{2}\Big(\gamma a_1 v_1+\beta a_2v_3+(\beta b_2+\gamma b_3)v_4+(\beta c_2+\gamma c_3) v_5\Big).   
\end{equation}
\begin{equation}   
(\mathscr{L}_\xi\nabla) (v_1,v_5)=-\tfrac{1}{2}\Big(\delta a_1 v_1+\varepsilon a_1v_2+\delta b_3v_4+\delta c_3v_5\Big).
\end{equation} 
\begin{equation}
    (\mathscr{L}_\xi\nabla) (v_2,v_2)=-(\alpha a_2+\beta b_2)v_1+\varepsilon c_2 v_3.\label{A(v_2,v_2)}
\end{equation}
\begin{equation}
    (\mathscr{L}_\xi\nabla) (v_2,v_3)=\tfrac{1}{2}\Big(-(\beta b_3+\gamma b_2+\delta c_2)v_1-\varepsilon c_2v_2+(\alpha a_1+\varepsilon c_3)v_3+\alpha b_1 v_4+\alpha c_1v_5\Big).
\end{equation}
\begin{equation}
    (\mathscr{L}_\xi\nabla) (v_2,v_4)=\tfrac{1}{2}\Big(-\gamma a_2v_1+\beta a_1v_3+\beta b_1v_4+\beta c_1v_5\Big).
\end{equation}
\begin{equation}
    (\mathscr{L}_\xi\nabla) (v_2,v_5)=-\tfrac{1}{2}\Big(\delta a_2 v_1+\varepsilon a_2v_2+\varepsilon b_3 v_4 +\varepsilon c_3v_5\Big).
\end{equation} 
\begin{equation}
    (\mathscr{L}_\xi\nabla) (v_3,v_3)=-(\gamma b_3+\delta c_3)v_1-\varepsilon c_3v_2.\label{A(v_3,v_3)}
\end{equation} 
\begin{equation}
    (\mathscr{L}_\xi\nabla) (v_3,v_4)=\tfrac{1}{2}\Big(\gamma a_1 v_3+\gamma b_1 v_4+\gamma c_1 v_5\Big).
\end{equation}
\begin{equation}
    (\mathscr{L}_\xi\nabla) (v_3,v_5)=\tfrac{1}{2}\Big((\varepsilon a_2 +\delta a_1)v_3+(\delta b_1+\varepsilon b_2)v_4+ (\delta c_1+\varepsilon c_2)v_5\Big).
\end{equation}
\begin{equation}
    (\mathscr{L}_\xi\nabla) (v_4,v_4)= 
(\mathscr{L}_\xi\nabla) (v_4,v_5)= 
(\mathscr{L}_\xi\nabla) (v_5,v_5)=0.\label{A(v_5,v_5)}
\end{equation}

  \item \(A_{5,1}\):\\
  
  We define:
\[\begin{cases} 
a_1=-\alpha \xi_2,\; a_2=\alpha \xi_1,\; 
,\; \text{and $a_i=0$}\; \forall i\in \{3,4,5\}\\ 
b_1=-\beta \xi_2 - \gamma \xi_3 ,\; 
b_2= \beta \xi_1, \; b_3= \gamma \xi_1\; \text{and $b_i=0$} \;\forall i\in \{4,5\}.
\end{cases}\]

And then,

\begin{equation}
  (\mathscr{L}_\xi\nabla) (v_1,v_1)=(\alpha a_1+\beta b_1)v_2+\gamma b_1v_3.  \label{(1)}
\end{equation}
\begin{equation}
 (\mathscr{L}_\xi\nabla) (v_1,v_2)=\tfrac{1}{2}\Big(-(\alpha a_1+\beta b_1)v_1+(\alpha a_2+\beta b_2)v_2+\gamma b_2v_3\Big). \label{(2)}  
\end{equation}
\begin{equation}
   (\mathscr{L}_\xi\nabla) (v_1,v_3)=\tfrac{1}{2}\Big(-\gamma b_1 v_1+\beta b_3v_2+\gamma b_3v_3\Big). \label{(3)}
\end{equation}

\begin{equation}
    (\mathscr{L}_\xi\nabla) (v_1,v_4)=-\tfrac{1}{2}\Big(\alpha a_2v_4+\alpha b_2 v_5\Big).\label{(4)}
\end{equation}
\begin{equation}
    (\mathscr{L}_\xi\nabla) (v_1,v_5)=-\tfrac{1}{2}\Big(\beta a_2v_4+(\beta b_2+\gamma b_3)v_5\Big).\label{(5)}
\end{equation}
\begin{equation}
    (\mathscr{L}_\xi\nabla) (v_2,v_2)=-(\alpha a_2+\beta b_2)v_1.\label{(6)}
\end{equation}
\begin{equation}
   (\mathscr{L}_\xi\nabla) (v_2,v_3)=-\tfrac{1}{2}(\beta b_3+\gamma b_2)v_1. \label{(7)}
\end{equation}
\begin{equation}
  (\mathscr{L}_\xi\nabla) (v_2,v_4)=\tfrac{1}{2}(\alpha a_1v_4+\alpha b_1v_5). \label{(8)} 
\end{equation}
\begin{equation}
    (\mathscr{L}_\xi\nabla) (v_2,v_5)=\tfrac{1}{2}(\beta a_1 v_4 +\beta b_1v_5).\label{(9)}
\end{equation}
\begin{equation}
    (\mathscr{L}_\xi\nabla) (v_3,v_3)=-\gamma b_3v_1.\label{(10)}
\end{equation}
\begin{equation}
    (\mathscr{L}_\xi\nabla) (v_3,v_5)=\tfrac{1}{2}(\gamma a_1v_4+ \gamma b_1v_5).\label{(11)}
\end{equation}
\begin{equation}
  (\mathscr{L}_\xi\nabla) (v_3,v_4)=(\mathscr{L}_\xi\nabla) (v_4,v_4)= 
(\mathscr{L}_\xi\nabla) (v_4,v_5)= 
(\mathscr{L}_\xi\nabla) (v_5,v_5)=0. \label{(11)} 
\end{equation}

  \item \(A_{5,2}\):\\
  
 We specify that,
 
\[\begin{cases} 
a_1=-\alpha\xi_2,\; a_2=\alpha\xi_1,\; 
\text{and $a_i=0$}\; \forall i\in \{3,4,5\}\\ 
b_1=- \beta\xi_2-\gamma\xi_3 ,\; 
b_2= \beta\xi_1, \; b_3=\gamma\xi_1\; \text{and $b_i=0$} \;\forall i\in \{4,5\}\\ 
c_1=-\delta\xi_4,\; 
c_2= 0, \; c_3=0,\; c_4=\delta\xi_1\; \text{and $c_5=0$}.
\end{cases}\]

One has:

\begin{equation}
    (\mathscr{L}_\xi\nabla) (v_1,v_1)=(\alpha a_1+\beta b_1)v_2+\gamma b_1v_3+\delta c_1 v_4.\label{66}
\end{equation} 
\begin{equation}
  (\mathscr{L}_\xi\nabla) (v_1,v_2)=\tfrac{1}{2}\Big(-(\alpha a_1+\beta b_1)v_1+(\alpha a_2+\beta b_2)v_2+\gamma b_2v_3\Big).  \label{67}
\end{equation}
\begin{equation}
   (\mathscr{L}_\xi\nabla) (v_1,v_3)=\tfrac{1}{2}\Big(-\gamma b_1 v_1+\beta b_3v_2+(\gamma b_3-\alpha a_2)v_3-\alpha b_2v_4\Big).\label{68} 
\end{equation}
\begin{equation}
   (\mathscr{L}_\xi\nabla) (v_1,v_4)=-\tfrac{1}{2}\Big((\gamma a_1+\delta c_1)v_1+\beta a_2v_3+(\beta b_2 +\gamma b_3-\delta c_4)v_4\Big).\label{69} 
\end{equation}
\begin{equation}
 (\mathscr{L}_\xi\nabla) (v_1,v_5)=-\tfrac{1}{2}(\delta b_1v_1+\delta c_4v_5). \label{70}  
\end{equation}
\begin{equation}
 (\mathscr{L}_\xi\nabla) (v_2,v_2)=-(\alpha a_2+\beta b_2)v_1.   \label{71}
\end{equation}
\begin{equation}
 (\mathscr{L}_\xi\nabla) (v_2,v_3)=\tfrac{1}{2}\bigr(-(\beta b_3+\gamma b_2)v_1+\alpha a_1 v_3 + \alpha b_1 v_4+ \alpha c_1 v_5\bigr).  \label{72} 
\end{equation}
\begin{equation}
 (\mathscr{L}_\xi\nabla) (v_2,v_4)=\tfrac{1}{2}(-\gamma a_2v_1+\beta a_1v_3+\beta b_1 v_4+\beta c_1v_5). \label{73}   
\end{equation}
\begin{equation}
 (\mathscr{L}_\xi\nabla) (v_2,v_5)=-\tfrac{1}{2}\delta b_2v_1.  \label{74}  
\end{equation}
\begin{equation}
  (\mathscr{L}_\xi\nabla) (v_3,v_3)=-\gamma b_3v_1.\label{76}   
\end{equation}
\begin{equation}
 (\mathscr{L}_\xi\nabla) (v_3,v_4)=\tfrac{1}{2}(\gamma a_1 v_3+\gamma b_1 v_4 +\gamma c_1v_5).  \label{77}  
\end{equation}
\begin{equation}
  (\mathscr{L}_\xi\nabla) (v_3,v_5)=-\tfrac{1}{2}\delta b_3v_1.  \label{78} 
\end{equation}
\begin{equation}
  (\mathscr{L}_\xi\nabla) (v_4,v_4)=-\delta c_4 v_1. \label{79}  
\end{equation}
\begin{equation}
  (\mathscr{L}_\xi\nabla) (v_4,v_5)=\tfrac{1}{2}(\delta a_1 v_3 + \delta b_1v_4+ \delta c_1 v_5).  \label{80} 
\end{equation}
\begin{equation}
    (\mathscr{L}_\xi\nabla) (v_5,v_5)=0.
\end{equation}
\end{enumerate}
\end{Lemma}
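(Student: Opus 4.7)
The plan is to apply equation \eqref{8} from Proposition \ref{propo}, namely
$$(\mathscr{L}_\xi\nabla)(v_i,v_j)=\tfrac12\bigl(S_\xi(v_i,v_j)-\mathscr{T}_\xi(v_i,v_j)\bigr),$$
to the explicit matrix data tabulated in Lemma \ref{Lem}. This reduces the proof of each formula in the statement to a matrix computation in the basis $\mathcal{B}=\{v_1,\dots,v_5\}$, carried out algebra by algebra.

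Concretely, for each of the ten algebras in Proposition \ref{fok} I would proceed as follows. First, read the column vectors $\operatorname{ad}_\xi(v_j)$ and $\operatorname{K}_{v_i}(v_j)$ directly from Table $1$. Next, compute $\operatorname{K}_{v_i}\circ\operatorname{ad}_\xi(v_j)$ by multiplying the matrix $\operatorname{K}_{v_i}$ with the column vector $\operatorname{ad}_\xi(v_j)$, and similarly compute $\operatorname{ad}_\xi\circ\operatorname{K}_{v_i}(v_j)$. Then assemble
$$S_\xi(v_i,v_j)=\operatorname{K}_{v_i}\operatorname{ad}_\xi(v_j)+\operatorname{K}_{v_j}\operatorname{ad}_\xi(v_i),\qquad \mathscr{T}_\xi(v_i,v_j)=\operatorname{ad}_\xi\operatorname{K}_{v_i}(v_j),$$
and take half their difference. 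Introducing the auxiliary scalars $a_k$, $b_k$, $c_k$, $d_k$ used in the statement, which are nothing but the components of $\operatorname{ad}_\xi(v_k)$ in the basis $\mathcal{B}$, condenses the resulting expressions into the compact closed forms that appear case by case.

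Two structural observations substantially shorten the work. By Lemma \ref{L1} and the remark following Proposition \ref{propo}, the bilinear map $(\mathscr{L}_\xi\nabla)$ is symmetric, so only the fifteen pairs $(i,j)$ with $i\le j$ need to be treated. Moreover, nilpotency makes many of the matrices $\operatorname{K}_{v_i}$ and the operator $\operatorname{ad}_\xi$ sparse: several pairs $(i,j)$ satisfy $\operatorname{K}_{v_i}(v_j)=0=\operatorname{ad}_\xi(v_j)$ simultaneously (typically when one index lies in the center), forcing $(\mathscr{L}_\xi\nabla)(v_i,v_j)=0$ immediately. This accounts for all the identically zero entries in the statement, such as the ones collected in \eqref{A565}, \eqref{A(v_5,v_5)}, and \eqref{(11)}.

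The main obstacle is purely a matter of bookkeeping rather than mathematical depth: the algebra $A_{5,6}$, which has the largest number of non-zero brackets, yields the densest matrices $\operatorname{K}_{v_i}$ and hence the longest expressions, as witnessed by formulas \eqref{A563}--\eqref{A565}. Careful tracking of indices, signs, and the contribution of each structural constant $\alpha,\beta,\gamma,\delta,\varepsilon,\sigma$ is essential. For the remaining algebras the computations are entirely analogous but shorter, since the supports of $\operatorname{K}_{v_i}$ and of $\operatorname{ad}_\xi$ are smaller and many terms in $S_\xi-\mathscr{T}_\xi$ vanish identically; the Jacobi identity, already used in \eqref{7}, ensures no spurious terms in $\operatorname{ad}$-compositions survive.
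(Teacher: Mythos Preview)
Your proposal is correct and follows exactly the approach indicated in the paper, which simply states that the lemma is obtained by combining the formula $(\mathscr{L}_\xi\nabla)(v_i,v_j)=\tfrac12\bigl(S_\xi(v_i,v_j)-\mathscr{T}_\xi(v_i,v_j)\bigr)$ from Proposition~\ref{propo} with the explicit matrices of $\operatorname{ad}_\xi$ and $\operatorname{K}_{v_i}$ tabulated in Lemma~\ref{Lem}. Your additional remarks on symmetry, sparsity, and the interpretation of the auxiliary scalars as entries of $\operatorname{ad}_\xi$ are accurate and make explicit what the paper leaves implicit.
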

\subsection{Affine vector fields on five-dimensional nilpotent Lie groups}


\begin{Proposition}
A left-invariant vector field $\xi$ on the five-dimensional, simply connected, nilpotent Riemannian Lie group $(\mathrm{G},\mathrm{h})$ is Affine if and only if
\[
\xi\in Z(\mathfrak{g}),
\]
where $Z(\mathfrak{g})$ denotes the center of $\mathfrak{g}$.
\end{Proposition}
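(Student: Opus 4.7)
The plan is to establish both implications, using Proposition~\ref{propo} for the algebraic criterion of affineness and Lemma~\ref{lem2} for the explicit values of $(\mathscr{L}_\xi\nabla)(v_i,v_j)$ across the ten canonical types.

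The $(\Leftarrow)$ direction is immediate: if $\xi \in Z(\mathfrak{g})$ then $\operatorname{ad}_\xi = 0$, so both $S_\xi(v,w) = \operatorname{K}_v\circ\operatorname{ad}_\xi(w) + \operatorname{K}_w\circ\operatorname{ad}_\xi(v)$ and $\mathscr{T}_\xi(v,w) = \operatorname{ad}_\xi\circ\operatorname{K}_v(w)$ vanish identically, and Proposition~\ref{propo}(1) yields that $\xi$ is affine. This step is purely formal and independent of the algebra.

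For the $(\Rightarrow)$ direction, assume $\xi$ is affine, i.e.\ $(\mathscr{L}_\xi\nabla)(v_i,v_j)=0$ for all $1 \le i,j \le 5$, and work through the ten types of Proposition~\ref{fok}. The abelian case $5A_1$ is trivial since $Z(\mathfrak{g})=\mathfrak{g}$. For each of the remaining nine types, the aim is to deduce from the polynomial system provided by Lemma~\ref{lem2} that every entry of $\operatorname{ad}_\xi$ vanishes, equivalently $\xi\in Z(\mathfrak{g})$. The systematic strategy is to exploit the diagonal terms $(\mathscr{L}_\xi\nabla)(v_i,v_i)=0$ first, in decreasing order of $i$: these are the sparsest equations and each typically isolates a positive structure constant multiplying a single coordinate $\xi_k$, so it forces $\xi_k=0$ and triggers a cascade that simplifies the remaining equations. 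To illustrate with $A_{5,6}$: equation~\eqref{A564} gives $\xi_1=0$; equation~\eqref{A563} then forces $\xi_2=0$; $(\mathscr{L}_\xi\nabla)(v_2,v_2)=0$ gives $\xi_3=0$; and $(\mathscr{L}_\xi\nabla)(v_1,v_1)=0$ gives $\xi_4=0$, leaving only $\xi_5$ free, which matches $Z(A_{5,6})=\operatorname{span}(v_5)$. Analogous cascades handle each of the other eight non-abelian algebras, and in every case the remaining free coordinates span the center.

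The main obstacle will be the bookkeeping in the larger cases $A_{5,6}$, $A_{5,5}$ and $A_{5,3}$, each involving several structure constants and more than a dozen independent scalar equations; the order of the cascade must be chosen carefully so that the positive structure constants can be cleared successively without circularity, and once the central coordinates are identified the off-diagonal equations must be checked to produce no additional constraints (which is guaranteed a priori by the already-proved $(\Leftarrow)$ direction). No new conceptual ingredient beyond Lemma~\ref{lem2} is required; the argument is computational but algorithmic once the cascade order is fixed, and the remaining non-abelian algebras reduce to strictly simpler instances of the same procedure.
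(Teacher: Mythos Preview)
Your proposal is correct and follows essentially the same approach as the paper: a case-by-case verification, for each of the ten canonical algebras of Proposition~\ref{fok}, that the system $(\mathscr{L}_\xi\nabla)(v_i,v_j)=0$ from Lemma~\ref{lem2} forces the non-central coordinates of $\xi$ to vanish. Your presentation is somewhat tidier---you supply a uniform $(\Leftarrow)$ argument via $\operatorname{ad}_\xi=0$ in Proposition~\ref{propo} (which the paper leaves implicit), you organize the $(\Rightarrow)$ direction as a systematic cascade on the diagonal entries rather than selecting equations ad hoc, and you observe that the off-diagonal checks are redundant once $(\Leftarrow)$ is in hand---but the underlying computation is the same.
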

Suppose \(\xi=\sum_{i=1}^n\xi_iv_i\) a left-invariant projective vector field on \(\mathrm{G},\mathrm{h}\). 
We shall verify this proposition by examining individually each of the Lie algebras listed in Proposition \ref{fok}.

\subsection{Case of $\mathfrak{g}=A_{5,4}$}
From $\mathscr{L}_\xi\nabla =0$ one has: \begin{eqnarray} 
     -(\alpha\xi_3 + \beta\xi_4)(\alpha v_3 + \beta v_4)=0 \\
 -\dfrac{\gamma}{2} \left( 2\alpha\xi_3 v_3 + \beta\xi_3 v_4 + \beta\xi_4 v_3 \right)=0 \\
\dfrac{1}{2} \left[ (\alpha\xi_1 + \gamma\xi_2)(\alpha v_3 + \beta v_4) - (\alpha\xi_3 + \beta\xi_4)(\alpha v_1 + \gamma v_2) \right]=0 \\
 \dfrac{\beta}{2} \left( -\alpha\xi_3 v_1 - \beta\xi_4 v_1 + \alpha\xi_1 v_3 + \beta\xi_1 v_4 \right)=0 \\
 -\gamma^2 \xi_3 v_3=0 \\
\dfrac{\gamma}{2} \left[ (\alpha\xi_1 + \gamma\xi_2) v_3 + \xi_3 (\alpha v_1 + \gamma v_2) \right]=0 \\
 \dfrac{\beta\gamma}{2} \left( \xi_1 v_3 + \xi_3 v_1 \right)=0 \\
 -(\alpha\xi_1 + \gamma\xi_2)(\alpha v_1 + \gamma v_2)=0 \\
-\dfrac{\beta}{2} \left( 2\alpha\xi_1 v_1 + \gamma\xi_1 v_2 + \gamma\xi_2 v_1 \right)=0 \\
 -\beta^2 \xi_1 v_1=0\\
  \dfrac{1}{2} \Big(\alpha(\alpha\xi_3 + \beta\xi_4) + \beta^2\xi_4\Big)v_5=0 \\
  \dfrac{1}{2} \gamma(\alpha\xi_3 + \beta\xi_4)v_5=0 \\
   -\dfrac{1}{2} \Big(\alpha(\alpha\xi_1 + \gamma\xi_2) + \gamma^2\xi_2\Big)v_5=0 \\
 -\dfrac{1}{2} \beta(\alpha\xi_1 + \gamma\xi_2)v_5=0
\end{eqnarray} 
From the fifth equation, one has:
\begin{equation*}
 \xi_3 = 0,
\end{equation*}
since $\gamma > 0$.\\
With $\beta > 0$, equation ten gives:
\begin{equation*}
 \xi_1  = 0.
\end{equation*}
Substituting $\xi_1 = \xi_3 = 0$ in equation four with $\beta>0$, 
\begin{equation*}
\xi_4 = 0.
\end{equation*}
Having $\xi_1 = \xi_3 = \xi_4 = 0$, The third equation conduct to:
\begin{align*}
\xi_2= 0.
\end{align*}
All remaining equations are satisfied after substituting $\xi_1 = \xi_2 = \xi_3 = \xi_4 = 0$.
Therefore, the system admits only the trivial solution:
\begin{equation*}
\xi_1 = \xi_2 = \xi_3 = \xi_4 = 0.
\end{equation*}
Hence $\xi\in span\{v_5\}=Z(\mathfrak{g})$.

\subsection{Case of $\mathfrak{g}=A_{3,1}\oplus 2A_1$}
The following system is deduced from the relation $\mathscr{L}_\xi\nabla =0$:
    \begin{eqnarray} 
        -\alpha^2\xi_2 v_2&=0  \\
        -\alpha^2\xi_1 v_1&=0  \\
        \dfrac{\alpha^2}{2}(\xi_1 v_1 + \xi_2 v_2)&=0 \\
         -\alpha^2\xi_1 v_5&=0\\
        \alpha^2\xi_2 v_5 &=0
   \end{eqnarray} 
Since $\alpha>0$, the system conduces directly to $\xi_1=\xi_2=0$. And then, $\xi\in span\{v_3,v_4,v_5\}=Z(\mathfrak{g})$.
    \subsection{Case of $\mathfrak{g}=A_{4,1}\oplus A_1$.}
\subsubsection{type I.} 
The condition $(\mathscr{L}_\xi\nabla) =0$ is equivalent to: \begin{eqnarray} 
-\Big((\alpha^2+\gamma^2)\xi_2+\beta\gamma\xi_3\Big) v_2-\beta(\gamma\xi_2+\beta\xi_3)v_3 =0 \\ 
        -(\alpha^2+\gamma^2)\xi_1 v_1 =0 \\ 
        \tfrac{1}{2}\Big((\alpha^2\xi_2+\beta\gamma\xi_3+\gamma^2\xi_2) v_1 +(\alpha^2+\gamma^2)\xi_1 v_2+\beta\gamma\xi_1 v_3\Big) =0\\
        -\beta^2\xi_1 v_1=0\\
         \tfrac{1}{2}\Big(\beta(\gamma\xi_2+\beta\xi_3)v_1+\beta\gamma\xi_1 v_2+(\beta^2-\alpha^2)\xi_1 v_3-\alpha\gamma\xi_1 v_5\Big) =0 \\ 
        -\tfrac{1}{2}\Big(2\beta\gamma\xi_1 v_1+\alpha^2\xi_2 v_3 +\alpha(\gamma\xi_2+\beta\xi_3) v_5\Big) =0\\
        \tfrac{1}{2}\Big(\alpha\beta\xi_2v_1-\alpha\gamma\xi_1 v_3+(\beta^2+\gamma^2)\xi_1 v_5\Big) =0 \\ 
        -\tfrac{1}{2}\Big(2\beta\gamma\xi_1 v_1+\alpha^2\xi_2 v_3 +\alpha(\gamma\xi_2+\beta\xi_3) v_5\Big) =0\\
        \tfrac{1}{2}\beta\Big(-\alpha\xi_2v_3-(\gamma\xi_2+\beta\xi_3)v_5\Big)=0
\end{eqnarray} 
Given $\alpha > 0$, $\gamma \in \mathbb{R}$, equation two gives:
 and $v_1 \neq 0$:
\begin{equation*}
\xi_1 = 0.
\end{equation*}
Equation four is also satisfied by $\xi_1 = 0$.
Equation one is equivalent to:
\begin{equation}
(\alpha^2 + \gamma^2)\xi_2 + \beta\gamma\xi_3 = 0. \quad \label{i} \end{equation}
\begin{equation}
\beta(\gamma\xi_2 + \beta\xi_3) = 0. \quad \label{ii}
\end{equation}
From \eqref{ii} and $\beta > 0$:
\begin{equation*}
\beta\xi_3 = -\gamma\xi_2.
\end{equation*}
Substitute into \eqref{i},
\begin{equation*}
\xi_2 = 0.
\end{equation*}
Then $\xi_3 = 0$.
After substituting \(\xi_1 = \xi_2 = \xi_3 = 0\), all remaining equations are automatically satisfied. Hence, the system admits only the trivial solution:
\begin{equation*}
\xi_1 = \xi_2 = \xi_3 = 0.
\end{equation*}
Thus, $\xi$ lies in the center of $\mathfrak {g}$, namely $\xi \in \mathrm{span}\{v_4,v_5\} = Z(\mathfrak {g})$.

    \subsubsection{type II.}

$( \mathscr{L}_\xi\nabla)=0$ yields: \begin{eqnarray}
-(\alpha^2+\gamma^2)\xi_2v_2-\beta^2\xi_3v_3 =0\\ 
-(\alpha^2+\gamma^2)\xi_1v_1 =0\\ 
\tfrac{1}{2}\Big((\alpha^2+\gamma^2)\xi_2v_1+(\alpha^2+\gamma^2)\xi_1v_2\Big) =0\\ 
\tfrac{1}{2}(\beta^2\xi_3v_1+(\beta^2-\alpha^2)\xi_1v_3-\alpha\gamma\xi_1 v_4) =0\\ 
-\tfrac{1}{2}\gamma(\alpha\xi_2v_3+\gamma\xi_2v_4+\beta\xi_3v_5)=0\\ 
-\beta^2\xi_1v_1=0\\
-\tfrac{1}{2}\gamma(\alpha\xi_1v_3+\gamma\xi_1v_4) =0\\ 
\tfrac{1}{2}\gamma(-\alpha\xi_2v_3-\gamma\xi_2v_4-\beta\xi_3v_5)=0\\
\tfrac{1}{2}(\alpha\beta\xi_2v_1-\beta^2\xi_1v_5) =0\\ 
-\tfrac{1}{2}\alpha\beta\xi_1v_1=0 \\ 
-\tfrac{1}{2}(\alpha\beta\xi_2v_3+\beta\gamma\xi_2 v_4+\beta^2\xi_3v_5)=0.
\end{eqnarray} 
Since $\alpha, \beta > 0$  equations two and six immediately imply:
\begin{equation*}
\xi_1 = 0.
\end{equation*}
The third equation with $\alpha > 0$ yields:
\begin{equation*}
(\alpha^2 + \gamma^2)\xi_2 = 0,
\end{equation*}
Hence,
\begin{equation*}
\xi_2 = 0.
\end{equation*}
Equation one with $ \beta > 0$ yields:
\begin{equation*}
\xi_3 = 0.
\end{equation*}
The general form of $\xi$ is:
\begin{equation*}
\xi = \xi_4v_4 + \xi_5v_5 \quad \text{where} \quad \xi_4, \xi_5 \in \mathbb{R}.
\end{equation*}
This corresponds:
\begin{equation*}
\xi\in Z(\mathfrak{g}) = \operatorname{span}\{v_4, v_5\}.
\end{equation*}

 \subsection{Case of $\mathfrak{g}=A_{5,6}$}               
In this case,
$\mathscr{L}_\xi\nabla=0$ if and only if \begin{eqnarray}
\left(-\alpha^2\xi_2 - \beta(\beta\xi_2 + \gamma\xi_3)\right)v_2 + \left(-\gamma(\beta\xi_2 + \gamma\xi_3) - \delta(\delta\xi_3 + \varepsilon\xi_4)\right)v_3 + \varepsilon(\delta\xi_3 + \varepsilon\xi_4)v_4=0\label{1}\\
\tfrac{1}{2}\Big(\alpha^2\xi_2v_1 + (\alpha^2\xi_1 + \beta^2\xi_1)v_2 + \left(\gamma\beta\xi_1 - \sigma(\delta\xi_3 + \varepsilon\xi_4)\right)v_3 + \varepsilon\sigma\xi_3v_4\Big)=0\\
-\tfrac{1}{2}\Big(\gamma(\beta\xi_2 + \gamma\xi_3)v_1 + (\beta\gamma\xi_1 - \sigma\delta\xi_3)v_2 + (\gamma^2\xi_1 + \delta(\delta\xi_1 + \sigma\xi_2))v_3 + (\varepsilon(\delta\xi_1 + \sigma\xi_2) - \alpha\gamma\xi_1)v_4\Big)=0\\
\tfrac{1}{2}\Big(\gamma(\beta\xi_2 + \gamma\xi_3)v_1 + (\delta\varepsilon\xi_1 - \beta\alpha\xi_1)v_3 + (\beta^2\xi_1 + \gamma^2\xi_1 + \varepsilon^2\xi_1)v_4\Big)=0\label{40}\\
-\tfrac{1}{2}\Big(\delta\alpha\xi_2v_1 + \sigma\alpha\xi_2v_2 + \delta\gamma\xi_1v_4 + (\delta(\delta\xi_1 + \sigma\xi_2) + \varepsilon^2\xi_1)v_5\Big)=0\\
-(\alpha^2\xi_1 + \beta^2\xi_1)v_1 + \sigma^2\xi_3v_3=0\label{6}\\
-\tfrac{1}{2}\Big(\beta\gamma\xi_1 + \gamma\beta\xi_1 + \delta\sigma\xi_3\Big)v_1 - \tfrac{1}{2}\Big(\sigma^2\xi_3v_2 - (\sigma(\delta\xi_1 + \sigma\xi_2) + \alpha^2\xi_2)v_3\Big)=0\\
\tfrac{1}{2}\Big(-\beta\alpha\xi_1 - \gamma\beta\xi_1 - \varepsilon\sigma\xi_3\Big)v_1 + \tfrac{1}{2}\Big(\beta\alpha\xi_2 + \sigma\varepsilon\xi_1\Big)v_3=0\\
\tfrac{1}{2}\Big(-\delta\alpha\xi_1 - \varepsilon\beta\xi_1\Big)v_1 - \tfrac{1}{2}\Big(\sigma\alpha\xi_1v_2 + \sigma\gamma\xi_1v_4\Big)=0\\
-\left(\gamma^2\xi_1 + \delta(\delta\xi_1 + \sigma\xi_2)\right)v_1 - \sigma(\delta\xi_1 + \sigma\xi_2)v_2=0\label{46}\\
\tfrac{1}{2}\Big(-\gamma^2\xi_1 - \varepsilon(\delta\xi_1 + \sigma\xi_2) - \delta\varepsilon\xi_1\Big)v_1 - \tfrac{1}{2}\sigma\varepsilon\xi_1v_2=0\\
\tfrac{1}{2}\Big(-\varepsilon\gamma\xi_1v_1 + (\delta\alpha\xi_2 + \sigma\alpha\xi_1)v_3 + (\delta\beta\xi_2 + \sigma\beta\xi_1)v_4\Big)=0\\
-\varepsilon^2\xi_1v_1=0\label{13}\\
\tfrac{1}{2}\Big(\varepsilon\alpha\xi_2v_3 + \varepsilon(\beta\xi_2 + \gamma\xi_3)v_4 + \varepsilon(\delta\xi_3 + \varepsilon\xi_4)v_5\Big)=0.   
\end{eqnarray}
Equation \eqref{13}:
\begin{equation*}
-\varepsilon^2\xi_1v_1 = 0.
\end{equation*}
Given $\varepsilon > 0$, this forces $\xi_1 = 0$.\\
As $\sigma>0$, it follows from equation \eqref{6} that:
\begin{equation*}
\xi_3 = 0.
\end{equation*}
Equation \eqref{46} together with $\sigma>0$ gives:
\begin{equation*}
\xi_2 = 0.
\end{equation*}
Given $\varepsilon> 0$ and $\xi_1=\xi_2=\xi_3=0$ equation \eqref{40} entails:
\begin{align*}
\xi_4 &= 0.
\end{align*}
Once $\xi_1 = \xi_2 = \xi_3 = \xi_4 = 0$, all remaining equations hold identically.
The only  admissible solution is:
\begin{equation*}
\xi_1 = \xi_2 = \xi_3 = \xi_4 = 0.
\end{equation*}
Hence, $\xi\in Z(\mathfrak{g}).$
\subsection{Case of $\mathfrak{g}=A_{5,5}$}

The equation $\mathscr{L}_\xi\nabla=0$ is equivalent to
\hspace{-1cm}\begin{eqnarray}
\left(-\alpha^2\xi_2 - \beta(\beta\xi_2 + \gamma\xi_3)\right)v_2 + \gamma(-\beta\xi_2 - \gamma\xi_3)v_3=0 \\
 \tfrac{1}{2}\Big(\alpha^2\xi_2v_1 + \left(\alpha^2\xi_1 + \beta(\beta\xi_1 - \delta\xi_3 - \varepsilon\xi_4)\right)v_2  + \left(\delta(-\beta\xi_2 - \gamma\xi_3) + \gamma(\beta\xi_1 - \delta\xi_3 - \varepsilon\xi_4)\right)v_3 + \varepsilon(-\beta\xi_2 - \gamma\xi_3)v_4\Big) =0 \\
 \tfrac{1}{2}\Big(\gamma(\beta\xi_2 + \gamma\xi_3)v_1 + \left(\beta(\gamma\xi_1 + \delta\xi_2) - \delta(-\beta\xi_2 - \gamma\xi_3)\right)v_2  + \gamma(\gamma\xi_1 + \delta\xi_2)v_3\Big)=0 \\
 \tfrac{1}{2}\Big(\left(\beta\varepsilon\xi_2 - \varepsilon(-\beta\xi_2 - \gamma\xi_3)\right)v_2 + \gamma\varepsilon\xi_2v_3 - \alpha^2\xi_1v_4 - \alpha(\beta\xi_1 - \delta\xi_3 - \varepsilon\xi_4)v_5\Big) =0 \\
 -\tfrac{1}{2}\Big(\varepsilon\alpha\xi_2v_2 + \beta\alpha\xi_1v_4 + \left(\beta(\beta\xi_1 - \delta\xi_3 - \varepsilon\xi_4) + \gamma(\gamma\xi_1 + \delta\xi_2)\right)v_5\Big)=0 \\
 -\left(\alpha^2\xi_1 + \beta(\beta\xi_1 - \delta\xi_3 - \varepsilon\xi_4)\right)v_1 + \delta(\beta\xi_1 - \delta\xi_3 - \varepsilon\xi_4)v_3 + \varepsilon(\beta\xi_1 - \delta\xi_3 - \varepsilon\xi_4)v_4=0 \\
 \tfrac{1}{2}\Big(-\left(\beta(\gamma\xi_1 + \delta\xi_2) + \gamma(\beta\xi_1 - \delta\xi_3 - \varepsilon\xi_4)\right)v_1  - \delta(\beta\xi_1 - \delta\xi_3 - \varepsilon\xi_4)v_2 + \delta(\gamma\xi_1 + \delta\xi_2)v_3 + \varepsilon(\gamma\xi_1 + \delta\xi_2)v_4\Big) =0\\
\tfrac{1}{2}\Big(-\beta\varepsilon\xi_2v_1 - \varepsilon(\beta\xi_1 - \delta\xi_3 - \varepsilon\xi_4)v_2 + \delta\varepsilon\xi_2v_3 + \left(\varepsilon^2\xi_2 + \alpha(-\alpha\xi_2)\right)v_4 + \alpha(-\beta\xi_2 - \gamma\xi_3)v_5\Big)=0 \\
 \tfrac{1}{2}\Big(-\varepsilon\alpha\xi_1v_2 + \beta(-\alpha\xi_2)v_4 - \left(-\beta(-\beta\xi_2 - \gamma\xi_3) + \delta(\gamma\xi_1 + \delta\xi_2) + \varepsilon^2\xi_2\right)v_5\Big)=0 \\
 -\gamma(\gamma\xi_1 + \delta\xi_2)v_1 - \delta(\gamma\xi_1 + \delta\xi_2)v_2 =0 \\
 -\tfrac{1}{2}\Big(\gamma\varepsilon\xi_2v_1 + \left(\delta\varepsilon\xi_2 + \varepsilon(\gamma\xi_1 + \delta\xi_2)\right)v_2\Big) =0 \\
 \tfrac{1}{2}\Big(\left(\gamma(-\alpha\xi_2) + \delta\alpha\xi_1\right)v_4 + \left(\gamma(-\beta\xi_2 - \gamma\xi_3) + \delta(\beta\xi_1 - \delta\xi_3 - \varepsilon\xi_4)\right)v_5\Big) =0 \\
 -\varepsilon^2\xi_2v_5 =0 \\
 \tfrac{1}{2}\Big(\varepsilon\alpha\xi_1v_4 + \varepsilon(\beta\xi_1 - \delta\xi_3 - \varepsilon\xi_4)v_5\Big) =0.    
\end{eqnarray}
From the second equation, the coefficient of \(v_1\) is
\[
\alpha^2 \xi_2 = 0.
\]
Since \(\alpha>0\), we immediately deduce
\[
\xi_2 = 0.
\]
From the third equation, the coefficient of \(v_3\) reads
\[
\gamma(\gamma \xi_1 + \delta \xi_2) = 0.
\]
Using \(\xi_2=0\) and \(\gamma>0\), we obtain
\[
\xi_1 = 0.
\]
From the first equation, the coefficient of \(v_3\) is
\[
\gamma(-\beta \xi_2 - \gamma \xi_3) = 0.
\]
Since \(\xi_2=0\) and \(\gamma>0\), this yields
\[
\xi_3 = 0.
\]
From the fourteenth equation, the coefficient of \(v_5\) is
\[
\varepsilon\big(\beta \xi_1 - \delta \xi_3 - \varepsilon \xi_4\big)=0.
\]
With \(\xi_1=\xi_3=0\) and \(\varepsilon>0\), we obtain
\[
-\varepsilon^2 \xi_4 = 0.
\]
Then \[
 \xi_4=0.
\]
We conclude that
\[
\xi_1=\xi_2=\xi_3=\xi_4=0 .
\]
Therefore, $\xi\in Z(\mathfrak{g}).$

\subsection{Case of de $A_{5,3}$}

$\mathscr{L}_\xi\nabla=0$ if and only if the following system is satisfied \begin{eqnarray}
    \left(-\alpha^2\xi_2 - \beta(\beta\xi_2 + \gamma\xi_3)\right)v_2 + \left(-\beta\gamma\xi_2 - \gamma^2\xi_3 - \delta^2\xi_3\right)v_3 =0 \quad\\
 \tfrac{1}{2}\Big(\alpha^2\xi_2v_1 + (\alpha^2\xi_1 + \beta^2\xi_1)v_2 + (\beta\gamma\xi_1 - \delta\varepsilon\xi_3 - \varepsilon\delta\xi_3)v_3\Big) =0 \quad\\
 \tfrac{1}{2}\Big(\gamma(\beta\xi_2 + \gamma\xi_3)v_1 + \left(\beta\gamma\xi_1 + \varepsilon\delta\xi_3\right)v_2  + \left(\gamma^2\xi_1 - \alpha^2\xi_1\right)v_3 - \alpha\beta\xi_1v_4 + \alpha\varepsilon\xi_3v_5\Big) =0 \quad\\
-\tfrac{1}{2}\Big(\gamma\alpha\xi_2v_1 + \beta\alpha\xi_1v_3 + (\beta^2\xi_1 + \gamma^2\xi_1)v_4 + (\beta\varepsilon\xi_3 + \gamma\delta\xi_1 + \gamma\varepsilon\xi_2)v_5\Big) =0\quad \\
 -\tfrac{1}{2}\Big(\delta\alpha\xi_2v_1 + \varepsilon\alpha\xi_2v_2 + \delta\gamma\xi_1v_4 + \delta(\delta\xi_1 + \varepsilon\xi_2)v_5\Big) =0\quad\\
 -(\alpha^2\xi_1 + \beta^2\xi_1)v_1 - \varepsilon^2\xi_3v_3 =0 \quad\\
 \tfrac{1}{2}\Big(-\left(\beta\gamma\xi_1 + \gamma\beta\xi_1 + \delta\varepsilon\xi_3\right)v_1 + \varepsilon^2\xi_3v_2  + (-\alpha^2\xi_2 + \varepsilon\delta\xi_1 + \varepsilon^2\xi_2)v_3 - \alpha(\beta\xi_2 + \gamma\xi_3)v_4 - \alpha\delta\xi_3v_5\Big) =0 \quad\\
 \tfrac{1}{2}\Big(-\gamma\alpha\xi_1v_1 + \beta\alpha\xi_2v_3 + \beta(\beta\xi_2 + \gamma\xi_3)v_4 + \beta\delta\xi_3v_5\Big) =0\quad \\
-\tfrac{1}{2}\Big(\delta\alpha\xi_1v_1 + \varepsilon\alpha\xi_1v_2 + \varepsilon\gamma\xi_1v_4 + \varepsilon(\delta\xi_1 + \varepsilon\xi_2)v_5\Big) =0 \quad\\
 -(\gamma^2\xi_1 + \delta(\delta\xi_1 + \varepsilon\xi_2))v_1 - \varepsilon(\delta\xi_1 + \varepsilon\xi_2)v_2=0 \quad\\
 \tfrac{1}{2}\Big(\gamma\alpha\xi_2v_3 + \gamma(\beta\xi_2 + \gamma\xi_3)v_4 + \gamma\delta\xi_3v_5\Big)=0 \quad\\
\tfrac{1}{2}\Big((\varepsilon\alpha\xi_1 - \delta\alpha\xi_2)v_3 + (\delta\beta\xi_2 + \delta\gamma\xi_3 + \varepsilon\beta\xi_1)v_4  + (-\delta^2\xi_3 + \varepsilon^2\xi_3)v_5\Big)=0.\quad
\end{eqnarray}
From the second equation, the coefficient of \(v_1\) reads
\[
\alpha^2 \xi_2 = 0.
\]
Since \(\alpha>0\), this gives
\[
\xi_2 = 0.
\]
From the sixth equation, the coefficient of \(v_1\) is
\[
-(\alpha^2\xi_1 + \beta^2\xi_1) = 0.
\]
As \(\alpha>0\), we deduce
\[
\xi_1 = 0.
\]
From the same equation, the coefficient of \(v_3\) is
\[
-\varepsilon^2 \xi_3 = 0,
\]
and since \(\varepsilon>0\), it follows that
\[
\xi_3 = 0.
\]
Collecting all results, we find
\[
\xi_1=\xi_2=\xi_3 = 0.
\]
Thus, the system admits only the trivial solution. 
This conclusion is independent of the real parameters \(\beta,\delta\), 
and relies solely on the positivity of \(\alpha,\gamma,\varepsilon\). Hence, $\xi\in Z(\mathfrak{g}).$
\subsection{Case of $\mathfrak{g}=A_{5,1}$}

The equation $\mathscr{L}_\xi\nabla=0$ is equivalent to: \begin{eqnarray}
 (-\alpha^2\xi_2 - \beta^2\xi_2 - \beta\gamma\xi_3)v_2 + (-\beta\gamma\xi_2 - \gamma^2\xi_3)v_3  =0\\
\tfrac{1}{2}\Big[(\alpha^2\xi_2 + \beta^2\xi_2 + \beta\gamma\xi_3)v_1 + (\alpha^2\xi_1 + \beta^2\xi_1)v_2 + \beta\gamma\xi_1v_3\Big]  =0\\
\tfrac{1}{2}\Big[(\beta\gamma\xi_2 + \gamma^2\xi_3)v_1 + \beta\gamma\xi_1v_2 + \gamma^2\xi_1v_3\Big]=0\\
-\tfrac{1}{2}(\alpha^2\xi_1v_4 + \alpha\beta\xi_1v_5)=0\\ -\tfrac{1}{2}(\alpha\beta\xi_1v_4 + (\beta^2\xi_1 + \beta\gamma\xi_1)v_5) =0\\
-(\alpha^2\xi_1 + \beta^2\xi_1)v_1 =0\\
-\tfrac{1}{2}(\beta^2\xi_1 + \beta\gamma\xi_1)v_1=0\\
\tfrac{1}{2}(-\alpha^2\xi_2v_4 - \alpha\beta\xi_2v_5 - \alpha\gamma\xi_3v_5)=0\\
\tfrac{1}{2}(-\alpha\beta\xi_2v_4 - \beta^2\xi_2v_5 - \beta\gamma\xi_3v_5)=0\\
-\gamma^2\xi_1v_1=0\\ 
\tfrac{1}{2}(-\alpha\gamma\xi_2v_4 - \beta\gamma\xi_2v_5 - \gamma^2\xi_3v_5) =0.
\end{eqnarray}
From the sixth relation we read
\[
-(\alpha^2\xi_1 + \beta^2\xi_1) = 0,
\]
which enforces
\[
\xi_1 = 0.
\]
Next, inspecting the first relation, the coefficient in front of \(v_2\) is
\[
-(\alpha^2+\beta^2)\xi_2 - \beta\gamma\xi_3 = 0,
\]
and the coefficient of \(v_3\) is
\[
-\beta\gamma\xi_2 - \gamma^2\xi_3 = 0.
\]
Taken together, these two linear conditions form the homogeneous system
\[
\begin{cases}
(\alpha^2+\beta^2)\xi_2 + \beta\gamma \xi_3 = 0\\
\beta\gamma \xi_2 + \gamma^2 \xi_3 = 0.
\end{cases}
\]
Since \(\alpha>0\) and \(\gamma>0\), the determinant of this system is strictly positive:
\[
(\alpha^2+\beta^2)\gamma^2 - (\beta\gamma)^2 = \alpha^2\gamma^2 > 0.
\]
Therefore, the only possible solution is
\[
\xi_2 = 0, \quad \xi_3 = 0.
\]

\medskip

\noindent
Finally,  
all coefficients vanish:
\[
\xi_1=\xi_2=\xi_3=0.
\]
Consequently, the system admits solely the trivial solution. Then, $\xi\in Z(\mathfrak{g}).$
\subsection{Case of $\mathfrak{g}=A_{5,2}$}

Equation $\mathscr{L}_\xi\nabla=0$ gives: \begin{eqnarray}
(-\alpha^2\xi_2 - \beta^2\xi_2 - \beta\gamma\xi_3)v_2 + (-\beta\gamma\xi_2 - \gamma^2\xi_3)v_3 + \delta^2\xi_4v_4=0\\
\tfrac{1}{2}\Big[(\alpha^2\xi_2 + \beta^2\xi_2 + \beta\gamma\xi_3)v_1 + (\alpha^2\xi_1 + \beta^2\xi_1)v_2 + \beta\gamma\xi_1v_3\Big] =0\\
\tfrac{1}{2}\Big[(\beta\gamma\xi_2 + \gamma^2\xi_3)v_1 + \beta\gamma\xi_1v_2 + (\gamma^2\xi_1 - \alpha^2\xi_1)v_3 - \alpha\beta\xi_1v_4\Big] =0\\
-\tfrac{1}{2}\Big[(-\gamma\alpha\xi_2 - \delta^2\xi_4)v_1 + \beta\alpha\xi_1v_3 + (\beta^2\xi_1 + \gamma^2\xi_1 - \delta^2\xi_1)v_4\Big] =0\\
-\tfrac{1}{2}\Big[\delta(\beta\xi_2 + \gamma\xi_3)v_1 + \delta^2\xi_1v_5\Big] =0\\
-(\alpha^2\xi_1 + \beta^2\xi_1)v_1 =0\\
\tfrac{1}{2}\Big[-(\beta^2\xi_1 + \beta\gamma\xi_1)v_1 - \alpha^2\xi_2v_3 - \alpha(\beta\xi_2 + \gamma\xi_3)v_4 - \alpha\delta\xi_4v_5\Big] =0\\
\tfrac{1}{2}\Big[-\gamma\alpha\xi_1v_1 + \beta\alpha\xi_2v_3 + \beta(\beta\xi_2 + \gamma\xi_3)v_4 + \beta\delta\xi_4v_5\Big]=0\\
-\tfrac{1}{2}\delta\beta\xi_1v_1=0\\
-\gamma^2\xi_1v_1 =0\\ \tfrac{1}{2}\Big[\gamma\alpha\xi_2v_3 + \gamma(\beta\xi_2 + \gamma\xi_3)v_4 + \gamma\delta\xi_4v_5\Big] =0\\
-\tfrac{1}{2}\delta\gamma\xi_1v_1 =0\\
-\delta^2\xi_1v_1 =0\\ \tfrac{1}{2}\Big[\delta\alpha\xi_2v_3 + \delta(\beta\xi_2 + \gamma\xi_3)v_4 + \delta^2\xi_4v_5\Big]=0
\end{eqnarray}
From the sixth relation yields,
\[ (\alpha^2+\beta^2)\,\xi_1=0,
\]
whence $\xi_1=0$.

Inspect now the first relation extracting coordinates along $v_2$, $v_3$, $v_4$ yields the homogeneous block:
\[
\begin{cases}
(\alpha^2+\beta^2)\,\xi_2 + \beta\gamma\,\xi_3 = 0,\\[2pt]
\beta\gamma\,\xi_2 + \gamma^2\,\xi_3 = 0,\\[2pt]
\delta^2\,\xi_4 = 0.
\end{cases}
\]
Because $\delta>0$, the last line gives $\xi_4=0$. For the $2\times2$ subsystem in $(\xi_2,\xi_3)$, the coefficient matrix is
\[
M=\begin{pmatrix}
\alpha^2+\beta^2 & \beta\gamma\\
\beta\gamma & \gamma^2
\end{pmatrix},
\qquad 
\det M=(\alpha^2+\beta^2)\gamma^2-(\beta\gamma)^2=\alpha^2\gamma^2>0.
\]
Thus $M$ is invertible, forcing $\xi_2=\xi_3=0$.

All remaining lines are then automatically satisfied (they either reproduce the same constraints or are multiples thereof), so the coordinate quadruple must be $(0,0,0,0)$. Consequently, $\xi\in Z(\mathfrak{g}).$

\section{Left-invariant projective vector fields on five-dimensional nilpotent Lie groups}

\begin{Proposition}
Any left-invariant projective vector field \(\xi\) on a five-dimensional simply connected nilpotent Lie group is necessarily an affine vector field.
\end{Proposition}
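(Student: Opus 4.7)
By Proposition~\ref{propo}, a left-invariant vector field $\xi$ is projective if and only if there exists $\omega\in\mathfrak{g}^*$ with
$$(\mathscr{L}_\xi\nabla)(v_i,v_j)=\omega_i\,v_j+\omega_j\,v_i,\qquad \omega_k:=\omega(v_k),$$
for every $i,j\in\{1,\dots,5\}$. Since the affine equation $\mathscr{L}_\xi\nabla=0$ is exactly the projective equation with $\omega\equiv 0$, the statement reduces to showing that for each of the ten algebras listed in Proposition~\ref{fok}, the only admissible $\omega$ is the zero 1-form; the affine classification of the previous subsection then delivers $\xi\in Z(\mathfrak{g})$. Thus the plan is to fix one algebra $\mathfrak{g}$ at a time, feed the explicit expressions of Lemma~\ref{lem2} into the projective system, and extract $\omega=0$.

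The main leverage is the diagonal specialisation $i=j$, which reads $(\mathscr{L}_\xi\nabla)(v_i,v_i)=2\omega_i v_i$: the left-hand side must be a scalar multiple of $v_i$, and that scalar equals $2\omega_i$. A systematic inspection of Lemma~\ref{lem2} reveals the structural fact that in every one of the ten cases $(\mathscr{L}_\xi\nabla)(v_i,v_i)$ is supported on basis vectors different from $v_i$, because formula \eqref{8} produces vectors in the image of $\operatorname{ad}$ and $\mathrm{K}$, which lies in the derived ideal $[\mathfrak{g},\mathfrak{g}]$ and, for the chosen basis, never contains the element $v_i$ whose square diagonal is taken. Consequently each diagonal condition splits into two independent demands: the non-$v_i$ components of the LHS vanish (giving new constraints on the $\xi_k$) and $\omega_i=0$.

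Concretely I would proceed algebra by algebra, going from the innermost central generators outwards along the descending central series. For instance, in $A_{5,6}$, equations \eqref{A565}, \eqref{A564} and \eqref{A563} give first $\omega_5=0$, then $\omega_4=0$ together with $\xi_1=0$, then $\omega_3=0$ together with $\xi_2=0$; injecting these into $(\mathscr{L}_\xi\nabla)(v_2,v_2)$ and $(\mathscr{L}_\xi\nabla)(v_1,v_1)$ peels off $\omega_2=\omega_1=0$ and $\xi_3=\xi_4=0$. The same cascade works for the nine other algebras: the Abelian case $5A_1$ is immediate since $\mathscr{L}_\xi\nabla\equiv 0$ forces $\omega=0$ by taking $i=j$; in every other case the central generator $v_5$ (and, when present, $v_4$) kills the corresponding $\omega$ immediately, and then the remaining $\omega_i$ fall in order after substituting the already-established constraints.

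The main obstacle is organisational rather than conceptual: ten algebras each require their own short cascade, and after each cascade one must verify that the off-diagonal projective equations with $\omega=0$ indeed collapse to the affine equations already solved in Section~4. Since those equations were shown there to admit only solutions in $Z(\mathfrak{g})$, the conclusion that every left-invariant projective vector field is affine (and central) follows uniformly across the classification.
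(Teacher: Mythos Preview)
Your approach is correct and, in fact, tidier than the paper's: the paper works algebra by algebra and in each case mixes diagonal and off-diagonal projective equations, simultaneously whittling down both the $\xi_k$ and the $\omega_k$ until $\omega\equiv 0$ drops out. Your observation that, in every one of the ten tables of Lemma~\ref{lem2}, the diagonal expression $(\mathscr{L}_\xi\nabla)(v_i,v_i)$ has no $v_i$-component is decisive: the five diagonal projective equations alone then force $\omega_1=\cdots=\omega_5=0$ in one stroke, after which the projective system collapses to the affine one already handled in Section~3. This uniform ``kill $\omega$ first, then quote the affine result'' strategy buys you a cleaner separation of concerns than the interleaved arguments the paper carries out.

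One caveat: your heuristic explanation for the structural fact is not right. You claim that the image of $\operatorname{ad}$ and $\mathrm{K}$ lies in $[\mathfrak{g},\mathfrak{g}]$, but the operators $\mathrm{K}_{v_i}$ visibly do not have image in the derived ideal; for instance in $A_{5,4}$ one has $[\mathfrak{g},\mathfrak{g}]=\mathrm{span}\{v_5\}$ while $(\mathscr{L}_\xi\nabla)(v_3,v_3)$ is a combination of $v_1$ and $v_2$. The absence of a $v_i$-component in $(\mathscr{L}_\xi\nabla)(v_i,v_i)$ is a genuine feature of the adapted bases of Proposition~\ref{fok}, and the honest justification is the case-by-case inspection of Lemma~\ref{lem2} that you already invoke; you should drop the derived-ideal sentence and simply record the verification. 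With that correction, your plan goes through.
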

Since any left-invariant affine vector field is projective, it is sufficient to prove that any left-invariant projective vector field is affine.
Let
\(
\xi = \sum_{i=1}^{n} \xi_{i}\,v_{i}
\)
denote a left-invariant projective vector field on the Riemannian manifold \((\mathrm{G}, \mathrm{h})\). We establish the proposition through a meticulous, case-by-case analysis of each Lie algebra enumerated in Proposition \ref{fok}.
\subsection{Case of $A_{5,4}$}
From Lemma \ref{lem2},

\[
(\mathscr{L}_\xi\nabla)(v_k,v_5)=
\begin{cases} 
\tfrac{1}{2}(\alpha c_3+ \beta c_4) v_5 & k=1 \\ 
\tfrac{1}{2}\gamma c_3 v_5 & k=2 \\ 
-\tfrac{1}{2}(\alpha c_1 + \gamma c_2) v_5 & k=3 \\ 
-\tfrac{1}{2}\beta c_1 v_5 & k=4\\
0&k=5,
\end{cases}
\] with: \[
c_1 = -(\alpha\xi_3 + \beta\xi_4), \quad
c_2 = -\gamma\xi_3, \quad
c_3 = \alpha\xi_1 + \gamma\xi_2, \quad
c_4 = \beta\xi_1,\quad \beta, \gamma > 0, \alpha \in \mathbb{R}.
\]
Since $\xi$ is projective, there exist $\omega\in\mathfrak{g}^*$ such that, $\big(\mathscr{L}_\xi\nabla\big)(u,v)=\omega(u)v+\omega(v)u$ for all $u,v\in \mathfrak{g}.$ The linear form $\omega$
is completely determined by its values on the basis vectors. Therefore, the previous expression of $(\mathscr{L}_\xi\nabla)(v_k,v_5)$ for $k=1,2,3,4,5$ gives:
$$\omega(v_1)=\omega(v_2)=\omega(v_3)=\omega(v_4)=\omega(v_5)=0.$$ This implies $\alpha c_3+ \beta c_4=\gamma c_3=\alpha c_1 + \gamma c_2=\beta c_1=0.$ Hence, $\omega$ is identically zero and\\ $\xi_1=\xi_2=\xi_3=\xi_4=0$ i.e. $\xi\in Z(\mathfrak{g}).$ Then, $\xi$ is affine. 

\subsection{Case of $\mathfrak{g}=A_{3,1}\oplus 2A_1$}
Drawing on Lemma \ref{lem2}, we infer that
\begin{equation}\label{117} (\mathscr{L}_\xi\nabla)(v_i,v_j)=0,\; \text{For}\; (i, j)\in \{1,2,3,4,5\}\times\{3,4,5\}. \end{equation}
Since \(\xi\) is projective, there exists a $1$-form \(\omega \in \mathfrak{g}^*\) such that
\[
(\mathscr{L}_\xi \nabla)(u,v) = \omega(u)\,v + \omega(v)\,u
\quad \text{for all } u, v \in \mathfrak{g}.
\]
Because \(\omega\) is entirely determined by its values on the basis vectors, equation \eqref{117} implies
\[
\omega(v_1) = \omega(v_2) = \omega(v_3) = \omega(v_4) = \omega(v_5) = 0,
\]
and hence \(\omega\) vanishes identically.  
Again by Lemma\ref{lem2}, we have
\begin{align*}
(\mathscr{L}_\xi \nabla)(v_1,v_1) &= -\tfrac{\alpha^2 \xi_1}{2}\,v_5=2\omega(v_1)v_1, \\
(\mathscr{L}_\xi \nabla)(v_2,v_2) &= -\tfrac{\alpha^2 \xi_2}{2}\,v_5=2\omega(v_2)v_2.
\end{align*}
It follows that \(\xi_1 = \xi_2 = 0\), so \(\xi \in Z(\mathfrak{g})\), and consequently \(\xi\) is affine.

 \subsection{Case of $\mathfrak{g}=A_{4,1}\oplus A_1$}
\subsubsection{type I}
Building on Lemma\ref{lem2}, we deduce the following expressions:
\begin{equation}
(\mathscr{L}_{\xi}\nabla)(v_i, v_3) =
\begin{cases}
\displaystyle \tfrac{1}{2}\bigl[\beta(\gamma\xi_2 + \beta\xi_3)\,v_1 + \beta\gamma\,\xi_1\,v_2 + (\beta^2 - \alpha^2)\,\xi_1\,v_3 - \alpha\gamma\,\xi_1\,v_5 \bigr] & i = 1\\[0.75em]
\displaystyle -\tfrac{1}{2}\bigl[2\beta\gamma\,\xi_1\,v_1 + \alpha^2\,\xi_2\,v_3 + \alpha(\gamma\xi_2 + \beta\xi_3)\,v_5 \bigr] & i = 2\\
-\beta^2\,\xi_1\,v_1 & i = 3\\
\alpha,\beta>0\;\gamma\in\mathbb{R}.
\end{cases}
\label{eq:Lxi_nabla_v3}
\end{equation}
\begin{equation}
(\mathscr{L}_{\xi}\nabla)(v_i, v_5) =
\begin{cases}
\displaystyle \tfrac{1}{2}\bigl[\alpha\beta\,\xi_2\,v_1 - \alpha\gamma\,\xi_1\,v_3 + (\beta^2 + \gamma^2)\,\xi_1\,v_5 \bigr] & i = 1\\[0.75em]
\displaystyle -\tfrac{1}{2}\bigl[2\beta\gamma\,\xi_1\,v_1 + \alpha^2\,\xi_2\,v_3 + \alpha(\gamma\xi_2 + \beta\xi_3)\,v_5 \bigr] & i = 2\\[0.75em]
\displaystyle \tfrac{1}{2}\,\beta\bigl[-\alpha\,\xi_2\,v_3 - (\gamma\xi_2 + \beta\xi_3)\,v_5 \bigr] & i = 3\\
0 & i = 4;5\\
\alpha,\beta>0\;\gamma\in\mathbb{R}.
\end{cases}
\label{eq:Lxi_nabla_v5}
\end{equation}
The projectivity of $\xi$ ensures the existence of a 1-form $\omega \in \mathfrak{g}^*$ such that, for every $u,v\in\mathfrak{g}$,
\[
\mathscr{L}_\xi\nabla(u,v)=\omega(u)\,v+\omega(v)\,u.
\]
Hence, $\omega$ is completely determined once its values on the basis $\mathcal{B}=\{v_1,\cdots,v_5\}$ are known.\\
By specializing to \(i=3\) in \eqref{eq:Lxi_nabla_v3}, we obtain:
\[
-\beta^2\,\xi_1\,v_1 = 2\,\omega(v_3)\,v_3,
\]
from which it follows that
\[
\xi_1 = 0,\; \omega(v_3) = 0.
\]
Substituting \(\xi_1 = 0\) into \eqref{eq:Lxi_nabla_v3} for \(i = 1\) yields:
\[
\tfrac{1}{2}\,\beta(\gamma\xi_2 + \beta\xi_3)\,v_1 = \omega(v_1)\,v_3 + \omega(v_3)\,v_1,
\]
and since \(\omega(v_3) = 0\), and \(\beta>0\), we conclude:
\[
\begin{aligned}
\gamma\xi_2 + \beta\xi_3 = 0\\
\omega(v_1) = 0.
\end{aligned}
\]
Therefore,
\[
\gamma\,\xi_2 + \beta\,\xi_3 = 0.
\]
Applying this identity in \eqref{eq:Lxi_nabla_v5} for \(i = 2\) gives:
\[
 \omega(v_2)\,v_5 + \omega(v_5)\,v_2=0.
\] so,
\[
\omega(v_2) = 0,\qquad \omega(v_5) = 0.
\]
The fourth equation of \eqref{eq:Lxi_nabla_v5} implies directly $$\omega(v_4)=0.$$
The exhaustive analysis of \eqref{eq:Lxi_nabla_v3} and \eqref{eq:Lxi_nabla_v5} leads to the following conditions:
\[
\xi_1 = 0,\quad \gamma\,\xi_2 + \beta\,\xi_3 = 0,\quad \omega(v_1) = \omega(v_2) = \omega(v_3) = \omega(v_4)= \omega(v_5) = 0,
\]
and hence the 1-form \(\omega\) vanishes identically.

\subsubsection{Type II}
One has:

\begin{equation}\label{ko}
(\mathscr{L}_\xi\nabla)(v_k, v_4) = 0 
\quad\text{for all } k \in \{3,4,5\}.
\end{equation}
Moreover,
\begin{equation}\label{k5}
(\mathscr{L}_\xi\nabla)(v_k, v_5) =
\begin{cases}
\dfrac{1}{2}\bigl(\alpha\beta\,\xi_2\,v_1 - \beta^2\,\xi_1\,v_5\bigr), & k = 1,\\
-\dfrac{1}{2}\,\alpha\beta\,\xi_1\,v_1, & k = 2,\\
-\dfrac{1}{2}\bigl(\alpha\beta\,\xi_2\,v_3 + \beta\gamma\,\xi_2\,v_4 + \beta^2\,\xi_3\,v_5\bigr), & k = 3,\\
0, & k = 5.
\end{cases}
\end{equation}
The fact that $\xi$ is projective implies the existence of $\omega \in \mathfrak{g}^*$ such that for all $u,v\in\mathfrak{g}$,
\[
(\mathscr{L}_\xi\nabla)(u,v)=\omega(u)\,v+\omega(v)\,u.
\]
Thus, $\omega$ is uniquely defined by how it acts on the basis $\mathcal{B}$ vectors.\\
Equation \eqref{ko} implies:
\begin{align*}
(\mathscr{L}_\xi\nabla)(v_3, v_4) &= \omega(v_3)\,v_4 + \omega(v_4)\,v_3 = 0\\
(\mathscr{L}_\xi\nabla)(v_4, v_4) &= 2\,\omega(v_4)\,v_4 = 0\\
(\mathscr{L}_\xi\nabla)(v_5, v_4) &= \omega(v_5)\,v_4 + \omega(v_4)\,v_5 = 0.
\end{align*}
Hence,
\[
\omega(v_3) = \omega(v_4) = \omega(v_5) = 0.
\]
Then it follows from the first equation in \eqref{k5} that, 
\[
\dfrac{1}{2}\bigl(\alpha\beta\,\xi_2\,v_1 - \beta^2\,\xi_1\,v_5\bigr) = \omega(v_1)\,v_5
\]
Therefore, 
$$\xi_2 = \omega(v_1)=0.$$
The second equation of \eqref{k5} implies:
\[
\xi_11 = \omega(v_2)= 0.
\]
Consequently, we deduce:
\[
\omega(v_1) = \omega(v_2) = \omega(v_3) = \omega(v_4) = \omega(v_5) = 0.
\]
Accordingly, the 1-form \(\omega\) collapses to the trivial solution.
\subsection{Case of $A_{5,6}$}
We begin with the equation \eqref{A565} of Lemma  \ref{lem2}
\[
0 = 2\,\omega(v_5)\,v_5.
\]
Hence we infer that \(\omega(v_5)\) must vanish, namely:
\[
\omega(v_5) = 0.
\]
Next, we consider equation \eqref{A564}
\[
-\varepsilon\,c_4\,v_1 = 2\,\omega(v_4)\,v_4.
\]
Since $\varepsilon>0$, we deduce the following condition:
\[\xi_1 = \omega(v_4) = 0.
\]
From equation \eqref{A563} we have:
\[
-\bigl(\gamma\,b_3 + \delta\,c_3\bigr)\,v_1 \;-\; \sigma\,c_3\,v_2 = 2\,\omega(v_3)\,v_3.
\]
Therefore, the following must hold:
$$\begin{aligned}
\gamma\,b_3 + \delta\,c_3 = 0\\
\xi_2 = 0\\
\omega(v_3) = 0.
\end{aligned}$$
Assuming now that \(\xi_1 = \xi_2 = 0\), we obtain:
\[
a_1 = a_2 = 0,\qquad b_1 = -\gamma\,\xi_3,\qquad c_1 = -\delta\,\xi_3 - \varepsilon\,\xi_4.
\]
When analyzing the expression for \((\mathscr{L}_\xi\nabla)(v_1, v_1)\), we must satisfy
\[
(\alpha\,a_1 + \beta\,b_1)\,v_2 \;+\; (\gamma\,b_1 + \delta\,c_1)\,v_3 \;+\; \varepsilon\,c_1\,v_4 = 0.
\]
This leads to the conclusion that \(\beta\,\gamma\,\xi_3 = 0\), and thus we obtain \(\xi_3 = 0\).\\
By substitution through all previous steps, we arrive at:
\[
\xi_1 = \xi_2 = \xi_3 =  \xi_4 = 0, \quad
\text{and}\quad \omega(v_i) = 0 \quad \text{for all } i \in \{1,2,3,4,5\}.
\]
We conclude that the $1$-form \(\omega\) must vanish identically.
\subsection{Case of $A_{5,5}$}
The equation corresponding to \(\eqref{(v_5,v_5)}\) in Lemma\ref{lem2} immediately yields:
\[
0 = 2\,\omega(v_5)\,v_5
\quad\mathrm{i.e.}\quad
\omega(v_5) = 0.
\]
The analysis of \(\eqref{(v_4,v_4)}\) implies:
\[
-\varepsilon\,b_4 = 0
\quad\text{and}\quad
\omega(v_4) = 0.
\]
Because \(\varepsilon\) is positive, one deduces \(b_4 = \varepsilon\,\xi_2 = 0\), hence \(\xi_2 = 0\).
By systematically substituting, the equations \(\eqref{(v_3,v_3)}\), \(\eqref{(v_1,v_1)}\), and \(\eqref{(v_2,v_2)}\) yield the following system:
\begin{align*}
\gamma\,b_3 &= 0,\\
\gamma\,b_1 &= 0,\\
\varepsilon\,b_2 &= 0.
\end{align*}
Therefore, \(\xi_1 = \xi_3 = \xi_4 = 0\). Under the assumptions \(\alpha, \gamma, \varepsilon > 0\), the only differential 1-form satisfying
\[
\mathscr{L}_\xi\nabla(u, v) = \omega(u)\,v + \omega(v)\,u
\]
is the zero form \(\omega \equiv 0\). Verifying all 15 equations confirms this unique solution.

\subsection{Case of $A_{5,3}$}
By virtue of equation \(\eqref{A(v_5,v_5)}\) in Lemma\ref{lem2}, we obtain:
\[
\omega(v_4) = \omega(v_5) = 0.
\]
Equation \(\eqref{A(v_3,v_3)}\) yields:
\[
-(\gamma b_3 + \delta c_3)\,v_1 - \varepsilon c_3\,v_2 = 2\,\omega(v_3)\,v_3,
\]
which implies:
\[
\begin{cases}
\gamma b_3 + \delta c_3 = 0\\
\varepsilon c_3 = 0\\
\omega(v_3) = 0.
\end{cases}
\]
Since \(\varepsilon > 0\), we conclude \(c_3 = 0\). Consequently,
\[
\begin{aligned}
\gamma^2\xi_1=0\\
    \delta\,\xi_1 + \varepsilon\,\xi_2 = 0.
\end{aligned}
\]
Combining this with relation \eqref{Aeq} applied to equation \(\eqref{A(v_1,v_1)}\), we obtain:
\begin{align*}
\gamma^2\xi_1&=0\\
\delta\,\xi_1 + \varepsilon\,\xi_2 &= 0\\
(\alpha^2+\beta^2)\,\xi_2 + \beta\gamma\,\xi_3 &= 0\\
 \beta\gamma\,\xi_2+(\gamma^2+\delta^2)\,\xi_2 &= 0.
\end{align*}
Given that \(\alpha, \gamma\) are positive, it follows that \(\xi_1 = \xi_2 = \xi_3 = 0\). Substituting \(\xi_1 = \xi_2 = \xi_3 = 0\) into equations \(\eqref{A(v_2,v_2)}\) and \(\eqref{A(v_1,v_3)}\) then leads to
\[
\omega(v_1) = \omega(v_2) = \omega(v_3) = 0.
\]
The remaining equations confirm that all components of \(\omega\) vanish. Therefore, \(\omega\) is identically zero.

 \subsection{Case if $A_{5,1}$}
By applying identity \(\eqref{Aeq}\) to relation \(\eqref{(11)}\), we obtain:
\begin{align*} \omega(v_3)v_4 + \omega(v_4)v_3=0\\ 2\omega(v_5)v_5=0\\ 2\omega(v_4)v_4=0. \end{align*}
Consequently, the components \(\omega(v_3)\), \(\omega(v_4)\), and \(\omega(v_5)\) must all vanish identically.\\
Applying identity \(\eqref{Aeq}\) to equations \(\eqref{(3)}\) and \(\eqref{(10)}\) yields:
\begin{align*} -\frac{1}{2}\gamma b_1 v_1 + \frac{1}{2}\beta b_3v_2 + \frac{1}{2}\gamma b_3v_3 &= \omega(v_1)v_3 + \omega(v_3)v_1\\ -\gamma b_3v_1 &= 2\omega(v_3)v_3=0. \end{align*}
From the positivity of \(\gamma\), we deduce:
\[
b_3 = 0
\quad\text{and}\quad
\omega(v_1) = 0.
\]
Using identity \(\eqref{Aeq}\) once more in conjunction with equation \(\eqref{(6)}\) directly gives:
\[
\omega(v_2) = 0.
\]
In this scenario, the only admissible \(1\)-form is
\[
\omega \equiv 0.
\]
\subsection{Case of $A_{5,2}$}
Since \(\xi\) is a projective vector field, there exists a 1-form \(\omega \in (A_{5,2})^*\) satisfying the identity \(\eqref{Aeq}\). Consequently, equation \(\eqref{66}\) leads us to:
\[
(\alpha\,a_1 + \beta\,b_1)\,v_2 + \gamma\,b_1\,v_3 + \delta\,c_1\,v_4 = 2\,\omega(v_1)\,v_1.
\]
Hence:
\[
\alpha\,a_1 + \beta\,b_1 = 0, \quad
\gamma\,b_1 = 0, \quad
\delta\,c_1 = 0, \quad
\omega(v_1) = 0.
\]
By the positivity of \(\alpha\), \(\gamma\), and \(\delta\), one infers:
\[
a_1 = b_1 = c_1 = 0, \quad \text{that is,} \quad \xi_2 = \xi_3 = \xi_4 = 0.
\]
Combining identity \(\eqref{Aeq}\) with equation \(\eqref{67}\), we obtain:
\[
-(\alpha\,a_2 + \beta\,b_2)\,v_1 = 2\,\omega(v_2)\,v_2,
\]
which implies:
\[
\alpha\,a_2 + \beta\,b_2 = 0.
\]
Substituting \(a_2 = \alpha\,\xi_1\) and \(b_2 = \beta\,\xi_1\), we find:
\[
(\alpha^2 + \beta^2)\,\xi_1 = 0.
\]
Since \(\alpha^2 + \beta^2 > 0\), it follows that \(\xi_1 = 0\).
From \(\xi_1 = \xi_2 = \xi_3 = \xi_4 = 0\), one concludes:
\[
a_i = b_i = c_i = 0 \quad \forall i \in \{1,2,3,4,5\}.
\]
Thus, all components of the Lie derivative vanish identically:
\[
(\mathscr{L}_\xi\nabla)(v_i, v_j) = 0 \quad \forall i, j \in \{1,2,3,4,5\}.
\]
The fundamental identity \(\eqref{Aeq}\) then reduces to:
\[
0 = \omega(v_i)\,v_j + \omega(v_j)\,v_i \quad \forall i, j \in \{1,2,3,4,5\}.
\]
In particular, for \(i = j\),
\[
\omega(v_i) = 0 \quad \forall i \in \{1,2,3,4,5\}.
\]
Hence, the 1-form \(\omega\) must necessarily be the zero form:
\[
\omega \equiv 0.
\]

\end{document}